\titleformat{\section}{\normalfont\scshape\centering}{\thesection}{1em}{}
\titleformat{\subsection}{\bfseries}{\thesubsection}{1em}{}
\newtheorem{theorem}{Theorem}[section]
\newtheorem{corollary}[theorem]{Corollary}
\newtheorem{lemma}[theorem]{Lemma}
\newtheorem{proposition}[theorem]{Proposition}
\theoremstyle{definition}
\newtheorem{definition}[theorem]{Definition}
\newtheorem{remark}[theorem]{Remark}
\numberwithin{equation}{section}
\newcommand{\ord}{\textup{ord}}
\newcommand{\Gal}{\textup{Gal}}
\begin{document}

\title{Simultaneous insolvability of exponential congruences}

\date{}
\author{Olli J\"arviniemi}
\address{Department of Mathematics and Statistics, P.O. Box 68, 00014 Helsinki, Finland}
\email{olli.jarviniemi@helsinki.fi}

\begin{abstract}
We determine a necessary and sufficient condition for the infinitude of primes $p$ such that none of the equations $a_i^x \equiv b_i \pmod{p}, 1 \le i \le n,$ are solvable. We control the insolvability of $a^x \equiv b \pmod{p}$ by power residues for multiplicatively independent $a$ and $b$, and by divisibilities and, most importantly, parities of orders in multiplicatively dependent cases. We also consider a more general problem concerning divisibilities of orders. The problems are motivated by Artin's primitive root conjecture and its variants.
\end{abstract}

\maketitle
\section{Introduction}
The famous Artin primitive root conjecture asserts that any integer $a$, not equal to a square or $-1$, is a primitive root modulo $p$ for infinitely many primes $p$. 

It is still an open problem to prove the statement for all $a$. However, considerable progress has been made. Hooley \cite{hooley} famously proved that, under a suitable generalization of the Riemann hypothesis (GRH), the set of primes $p$ for which $a$ is a primitive root modulo $p$ has a density, positive for $a$ not equal to $-1$ or a square. Unconditionally, Heath-Brown \cite{heath-brown} has shown that the statement is true for ``many'' values, for example for all except at most two primes. For a comprehensive survey on Artin's conjecture, see \cite{moree}.

The so-called two-variable Artin conjecture concerns the set of primes $p$ for which the equation $a^x \equiv b \pmod{p}$ is solvable for fixed integers $a$ and $b$. In the multiplicatively dependent case one can show (unconditionally) that the density of such $p$ exists and is a rational number, positive except for cases where this set is trivially finite. The density question has been solved for the multiplicatively independent case assuming GRH. We refer to \cite{ms} for these results.

Let $f(a, b, x)$ denote the number of primes $p \le x$ such that $a^x \equiv b \pmod{p}$ is solvable. While it has been proven that $f(a, b, x)$ is of magnitude $c\pi(x)$ under GRH, where $\pi(x)$ is the number of primes $p \le x$ and $c = c(a, b)$ is a constant, the best unconditional results in this direction are $f(a, b, x) \ge c'\log(x)$ for multiplicatively independent $a, b$, proven recently in \cite{mss}.

One generalization for the Artin conjecture is considering the set of primes $p$ for which all of the integers $a_1, \ldots , a_n$ are primitive roots modulo $p$. This problem has been treated by Matthews in \cite{matthews}, where it is determined when this set is infinite under GRH.

It is thus natural to consider the simultaneous solvability of congruences of the form $a^x \equiv b \pmod{p}$. In \cite{jarviniemi} the following result is proven.
\begin{theorem}
Assume GRH. Let $a_1, \ldots , a_n, b_1, \ldots , b_n$ be integers greater than one. There are infinitely many primes $p$ such that all of the equations $a_i^x \equiv b_i \pmod{p}, 1 \le i \le n$ are solvable. Furthermore, the density of such primes exists and is positive.
\end{theorem}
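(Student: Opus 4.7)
The plan is to translate the simultaneous solvability into a Frobenius condition in a compositum of Kummer extensions and then apply the GRH-effective Chebotarev density theorem. Since $(\mathbb{Z}/p\mathbb{Z})^{*}$ is cyclic, $a^x \equiv b \pmod p$ is solvable if and only if $\ord_p(b) \mid \ord_p(a)$, equivalently $v_\ell(\ord_p(b)) \le v_\ell(\ord_p(a))$ for every prime $\ell$. Moreover, the $\ell$-adic order of $a$ modulo $p$ is determined by how the Frobenius at $p$ acts on the roots of unity $\zeta_{\ell^j}$ and on the radicals $a^{1/\ell^j}$, so the joint solvability condition, truncated at a parameter $y$, is encoded in the splitting of $p$ in the finite Kummer extension
\[ K_y \,:=\, \mathbb{Q}\bigl(\zeta_M,\, a_i^{1/M},\, b_i^{1/M} : 1 \le i \le n\bigr), \qquad M \,:=\, \prod_{\ell \le y}\ell^{\lceil \log y\rceil}, \]
via membership in a distinguished subset $S_y \subseteq \Gal(K_y/\mathbb{Q})$.

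Under GRH, the Lagarias--Odlyzko form of the Chebotarev density theorem applied to $K_y/\mathbb{Q}$ yields
\[ \#\{p \le x : \mathrm{Frob}_p \in S_y\} \,=\, \frac{|S_y|}{[K_y:\mathbb{Q}]}\,\pi(x) \,+\, O\!\bigl(x^{1/2}\log(x\cdot\mathrm{disc}(K_y))\bigr). \]
Choosing $y = y(x)$ growing slowly enough keeps the error $o(\pi(x))$, while a short counting argument (using that $p-1$ has only $O(\log p)$ distinct prime divisors) bounds the discrepancy between the truncated condition at level $y$ and the full solvability condition. This reduces the theorem to proving
\[ \delta \,:=\, \liminf_{y \to \infty}\frac{|S_y|}{[K_y:\mathbb{Q}]} \,>\, 0. \]

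The main obstacle is this positivity statement. I would tackle it by a Kummer-theoretic product decomposition of $\Gal(K_y/\mathbb{Q})$: for all but finitely many primes $\ell$, the local Kummer extensions $\mathbb{Q}(\zeta_{\ell^j}, a_i^{1/\ell^j}, b_i^{1/\ell^j})$ are maximal in the sense of Kummer theory, so the $\ell$-local contribution $c_\ell$ to $|S_y|/[K_y:\mathbb{Q}]$ approaches a definite limit and the error terms give an absolutely convergent product $\prod_\ell c_\ell > 0$. For the finitely many exceptional primes and for the finitely many multiplicative relations among the $a_i$ and $b_j$, a direct case analysis is required; here the hypothesis $a_i, b_i > 1$ ensures that none of these integers is a root of unity, ruling out the cyclotomic obstructions that typically block Artin-type arguments, and an explicit Frobenius element compatible with all the relations can be exhibited. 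Granting the convergence and the exceptional-prime analysis, $\delta > 0$ and the theorem follows.
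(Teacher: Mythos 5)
This statement is not proved in the present paper at all: it is quoted from \cite{jarviniemi} as motivation, so there is no in-paper proof to compare your argument against. The closest the paper comes is the machinery of Sections 5--8 (Propositions \ref{prop:kummerGalois} and \ref{prop:kummerDisjoint}, and the explicit construction of automorphisms of composita of Kummer extensions whose Artin symbols force prescribed power-residue behaviour), which is the same toolkit one needs here. Judged on its own terms, your outline is the standard Hooley--Lenstra--Moree--Stevenhagen strategy and its skeleton is sound: solvability of $a^x\equiv b\pmod p$ is indeed equivalent to $\ord_p(b)\mid\ord_p(a)$, i.e.\ to $v_\ell(\mathrm{ind}_p(a))\le v_\ell(\mathrm{ind}_p(b))$ for all $\ell$, and these are Frobenius conditions in the fields $\mathbb{Q}(\zeta_{\ell^j},a_i^{1/\ell^j},b_i^{1/\ell^j})$.

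The genuine gap is that both pieces of real mathematical content are deferred. First, the passage from the truncated condition at level $y$ to the full condition is not ``a short counting argument'': the event that solvability fails at a large prime $\ell$ (namely $\ell\mid p-1$ and $a_i$ an $\ell$-th power mod $p$ while $b_i$ is not) must be bounded over three ranges of $\ell$, with GRH-Chebotarev in the middle range and a Brun--Titchmarsh/large-sieve bound in the top range; this is the heart of Hooley's method and needs to be carried out, not asserted. Second, and more seriously, the positivity of the density --- the actual assertion of the theorem --- is exactly the step you label ``a direct case analysis is required \ldots an explicit Frobenius element compatible with all the relations can be exhibited'' and then grant. Given arbitrary multiplicative dependencies among the $a_i$ and $b_i$ (say $b_1^{s}=a_1^{r}$, or $b_1=a_2$ and $b_2=a_1$), one must exhibit, at every relevant prime $\ell$ and in particular at the entangled prime $\ell=2$ where $\mathbb{Q}(\zeta_n)$ contains quadratic subfields, a consistent assignment of power-residue symbols to a multiplicative basis of $\langle a_1,\ldots,b_n\rangle$ forcing $\mathrm{ind}_p(a_i)\mid\mathrm{ind}_p(b_i)$ for all $i$ simultaneously; the hypothesis $a_i,b_i>1$ is what makes this possible, but invoking it without the construction is precisely where the proof is missing. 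To repair this you would need an argument in the style of Sections \ref{sec:kummer} and \ref{sec:sufficiency} of this paper: choose a multiplicative basis, use Proposition \ref{prop:kummerGalois} to realize a prescribed automorphism of a single large Kummer extension, and verify that the prescription is compatible with all the relations.
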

Some remarks on the case of negative integers are also given in \cite{jarviniemi}.

Here we consider the complement problem: when do there exist infinitely many primes $p$ such that none of the congruences $a_i^x \equiv b_i \pmod{p}$ are solvable?

Schinzel \cite{schinzel} has considered systems of exponential congruences, though he has studied systems of the form $\prod_{1 \le i \le k} a_{j, i}^{x_i} \equiv b_j \pmod{p}, j = 1, 2, \ldots , n$, so the results do not immediatelly apply to our problem. However, his results are enough to solve the case of one equation (which he has also considered in \cite{schinzel2}). Similarly, Somer \cite{somer} has studied the maximal divisors of $k$th order linear recurrences, and while the results solve our problem for one equation, they do not apply to the general case in our problem.

We will see that multiplicatively independent pairs $(a_i, b_i)$ do not affect the simultaneous insolvability of exponential congruences.  In the dependent cases one has to guarantee divisibility of orders by odd primes and parity conditions on orders. The former do not cause any obstructions either, but the parities do. As an easy example, $2$ and $3$ having odd orders modulo a prime implies that $6$ has odd order too.

Motivated by this, we consider the general problem on satisfiability of (in)divisibility of multiplicative orders. We prove that such conditions may be reduced to considering (in)divisibility by prime powers.

This article is structured as follows. We first perform elementary considerations on exponential congruences to turn the problem into a more tractable form. We then state the main results, which we prove after introducing some notation and preliminaries.

The author thanks Joni Teräväinen for helpful discussions and comments on earlier versions of the manuscript, and the referee for a thorough reading of the article, corrections and suggestions.

\section{Solvability of exponential congruences} \label{sec:elementary}

We first characterize the solvability of an equation $a^x \equiv b \pmod{p}$, $a, b \in \mathbb{Z}$. We use the following terminology.

\begin{definition}
We say that a pair of integers $(a, b)$ is...

\begin{itemize}
\item ...\emph{trivial}, if $|a| \le 1$, $b \in \{0, 1\}$, or $b = a^k$ for some $k \ge 1$.
\item ...\emph{irrational}, if $a$ and $b$ are multiplicatively independent over $\mathbb{Q}$.
\item ...\emph{odd}, if $b = -a^k$ for some $k \ge 0$.
\item ...\emph{divisible}, if $b^s = \pm a^r$ for some positive $r, s$ with $\gcd(r, s) = 1$ and $s \ge 2$ not a power of two.
\item ...\emph{even}, if $b^s = a^r$ for some positive $r, s$ with $\gcd(r, s) = 1$ and $s \ge 2$ a power of two.
\item ...\emph{stronly even}, if $b^s = -a^r$ for some positive $r, s$ with $\gcd(r, s) = 1$ and $s \ge 2$ a power of two.  
\end{itemize}
\end{definition}

For a trivial pair $(a, b)$ it is trivial to determine whether the equation $a^x \equiv b \pmod{p}$ is solvable or not: if $b$ is a power of $a$, the equation is always solvable, and otherwise it is insolvable for all except finitely many $p$.

If the pair $(a, b)$ is divisible, even or strongly even, then $b^s = \pm a^r$ for some $\gcd(r, s) = 1$. If $b^s = a^r$, there exists an integer $c$ such that $b = c^r$ and $a = c^s$. If $b^s = -a^r$, there exists an integer $c$ for which $b = c^r$ and $a = -c^s$. The number $c$ is called the \emph{core} of the pair $(a, b)$.

The characterization is done in the following series of lemmas. Here and in what follows $p$ is a prime and $\ord_p(a)$ denotes the order of $a$ modulo $p$. We always assume $p \nmid a$ when using this notation.

\begin{lemma}
\label{ord}
Let $(a, b)$ be a pair of integers, and let $p$ be a prime not dividing $ab$. The equation $a^x \equiv b \pmod{p}$ is solvable if and only if $\ord_p(b) | \ord_p(a)$.
\end{lemma}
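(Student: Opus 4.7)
The plan is to pass to a primitive root representation and reduce the congruence to a linear one. Let $g$ be a primitive root modulo $p$ and write $a \equiv g^{\alpha} \pmod{p}$ and $b \equiv g^{\beta} \pmod{p}$ for some integers $\alpha,\beta$ with $0 \le \alpha,\beta < p-1$. Then $a^x \equiv b \pmod{p}$ is equivalent to $g^{\alpha x - \beta} \equiv 1 \pmod{p}$, i.e.\ to the linear congruence $\alpha x \equiv \beta \pmod{p-1}$. By the standard solvability criterion for linear congruences, this holds for some integer $x$ if and only if $\gcd(\alpha, p-1) \mid \beta$.

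Next I would identify the orders in these terms. A direct computation gives $\ord_p(a) = (p-1)/\gcd(\alpha, p-1)$, and analogously for $b$. Set $d = \gcd(\alpha, p-1)$, so $\ord_p(a) = (p-1)/d$. The divisibility $d \mid \beta$ is equivalent to $\beta \cdot (p-1)/d \equiv 0 \pmod{p-1}$, i.e.\ to $b^{(p-1)/d} \equiv 1 \pmod{p}$, which in turn is equivalent to $\ord_p(b) \mid (p-1)/d = \ord_p(a)$. Combining this with the previous paragraph yields the claimed equivalence.

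Conceptually, one can also phrase it group-theoretically: the congruence $a^x \equiv b \pmod{p}$ is solvable exactly when $b$ lies in the cyclic subgroup $\langle a \rangle \le (\mathbb{Z}/p\mathbb{Z})^*$, and since $(\mathbb{Z}/p\mathbb{Z})^*$ is cyclic it has a unique subgroup of each order dividing $p-1$, namely $\{y : y^m \equiv 1\}$ where $m$ is the order of the subgroup. Thus $b \in \langle a \rangle$ iff $b^{\ord_p(a)} \equiv 1 \pmod{p}$ iff $\ord_p(b) \mid \ord_p(a)$. I do not anticipate any real obstacle here; the statement is elementary and follows from the cyclicity of $(\mathbb{Z}/p\mathbb{Z})^*$, so the only care needed is in bookkeeping the relationship between the exponent $\alpha$ and the order $\ord_p(a)$.
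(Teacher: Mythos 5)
Your proposal is correct and follows essentially the same route as the paper: pass to a primitive root, reduce to the linear congruence $\alpha x \equiv \beta \pmod{p-1}$, apply the gcd solvability criterion, and translate back via $\ord_p(a) = (p-1)/\gcd(\alpha,p-1)$. The extra group-theoretic remark is a harmless restatement of the same fact.
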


\begin{proof}
Let $g$ be a primitive root modulo $p$, and let $g^A \equiv a \pmod{p}$ and $g^B \equiv b \pmod{p}$. The equation $a^x \equiv b \pmod{p}$ is equivalent with
$$Ax - B \equiv 0 \pmod{p-1}.$$
This equation is solvable if and only if $(A, p-1) | B$, which is equivalent to the condition, as $\ord_p(a) = (p-1)/(A, p-1)$ and $\ord_p(b) = (p-1)/(B, p-1)$.
\end{proof}

\begin{lemma}
\label{odd}
Let $(a, b)$ be an odd pair, and let $p \nmid 2ab$ be a prime. The equation $a^x \equiv b \pmod{p}$ has no solution if and only if $\ord_p(a)$ is odd.
\end{lemma}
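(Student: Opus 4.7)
The plan is to reduce to Lemma \ref{ord} and then work out the order of $b=-a^k$ modulo $p$ in the two parity cases. Since $p \nmid 2ab$, both $a$ and $b$ are units modulo $p$, so Lemma \ref{ord} applies and it suffices to show that $\ord_p(b) \nmid \ord_p(a)$ if and only if $\ord_p(a)$ is odd.

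First I would handle the easy direction: suppose $\ord_p(a)$ is even. Then $a^{\ord_p(a)/2} \equiv -1 \pmod p$, so $-1 \equiv a^j$ for some $j \ge 0$, and therefore $b = -a^k \equiv a^{k+j} \pmod p$. In particular $b$ is a power of $a$ modulo $p$, so the equation $a^x \equiv b \pmod p$ is solvable. Equivalently, $\ord_p(b)$ divides $\ord_p(a)$.

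For the converse, suppose $\ord_p(a)$ is odd. Then $\ord_p(a^k)$ is also odd, while $\ord_p(-1) = 2$ (using $p \ne 2$). Since the orders of $-1$ and $a^k$ are coprime, the standard fact that $\ord_p(uv) = \ord_p(u)\ord_p(v)$ when $\gcd(\ord_p(u),\ord_p(v)) = 1$ yields $\ord_p(b) = \ord_p(-a^k) = 2\ord_p(a^k)$, which is even. An even number cannot divide the odd number $\ord_p(a)$, so by Lemma \ref{ord} the equation is insolvable.

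There is no real obstacle here; the only things to be slightly careful about are that $p$ is odd (ensured by $p \nmid 2ab$) so that $-1 \not\equiv 1 \pmod p$, and the edge case $k = 0$ (so $b = -1$), which is covered verbatim by the same argument.
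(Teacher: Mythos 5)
Your proof is correct and follows essentially the same route as the paper: both arguments reduce the statement to Lemma \ref{ord}. The paper gets there slightly faster by shifting the exponent ($a^x \equiv -a^k$ becomes $a^{x-k} \equiv -1$) so that Lemma \ref{ord} is applied to the pair $(a,-1)$ with $\ord_p(-1)=2$, avoiding your computation of $\ord_p(b)$ via the coprime-order product formula and the identification $a^{\ord_p(a)/2} \equiv -1 \pmod{p}$; but your two-case analysis is sound and covers the edge case $k=0$.
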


\begin{proof}
Let $b = -a^k$ with integer $k$. The equation $a^x \equiv b \pmod{p}$ is equivalent to $a^{x - k} \equiv -1 \pmod{p}$. By Lemma \ref{ord} this is insolvable if and only if $2 = \ord_p(-1) \nmid \ord_p(a)$.
\end{proof}

\begin{lemma}
\label{even}
Let $(a, b)$ be an even pair, let $c$ be its core, and let $p \nmid 2ab$ be a prime. The equation $a^x \equiv b \pmod{p}$ is insolvable if and only if $2 \mid \ord_p(c)$.
\end{lemma}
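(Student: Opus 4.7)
The plan is to reduce the statement to Lemma \ref{ord} and then compute orders in terms of the core. Since $(a,b)$ is even, write $s = 2^t$ with $t \ge 1$; the condition $\gcd(r,s) = 1$ then forces $r$ to be odd. By definition of the core we have $a = c^s$ and $b = c^r$, and by Lemma \ref{ord} the equation $a^x \equiv b \pmod p$ is insolvable if and only if $\ord_p(b) \nmid \ord_p(a)$. Setting $d := \ord_p(c)$, I would use the elementary identity $\ord_p(c^k) = d/\gcd(d,k)$ to rewrite
$$\ord_p(a) = \frac{d}{\gcd(d,s)}, \qquad \ord_p(b) = \frac{d}{\gcd(d,r)}.$$

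The argument then splits on the parity of $d$, which is exactly the dichotomy in the lemma. If $d$ is odd, then $\gcd(d,s) = 1$ since $s$ is a power of $2$, so $\ord_p(a) = d$. Since $\ord_p(b) = d/\gcd(d,r)$ automatically divides $d$, we conclude that the equation is solvable, matching the ``only if'' direction.

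For the ``if'' direction, assume $d$ is even and compare $2$-adic valuations $v_2$. Because $r$ is odd, $\gcd(d,r)$ is odd, and hence $v_2(\ord_p(b)) = v_2(d) \ge 1$. On the other hand $\gcd(d,s) = 2^{\min(v_2(d),\, t)} \ge 2$, so $v_2(\ord_p(a)) = v_2(d) - \min(v_2(d), t) \le v_2(d) - 1$. Thus $v_2(\ord_p(b)) > v_2(\ord_p(a))$, which rules out $\ord_p(b) \mid \ord_p(a)$ and establishes insolvability.

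There is no serious obstacle here: the proof is entirely mechanical once one passes to the core. The only places to be careful are (i) verifying that $s$ being a power of $2$ coprime to $r$ really forces $r$ odd, which is where the ``power of two'' hypothesis in the definition of an even pair gets used, and (ii) distinguishing this lemma cleanly from the divisible case (where $s$ has an odd prime factor and the $2$-adic obstruction disappears) and the strongly even case (where the sign $b^s = -a^r$ introduces an extra $-1$ that would require a separate treatment).
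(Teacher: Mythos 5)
Your proof is correct, but it takes a slightly different route from the paper's. The paper rewrites $a^x \equiv b \pmod p$ as $c^{sx-r} \equiv 1 \pmod p$ and applies the solvability criterion for the linear congruence $sx \equiv r \pmod{\ord_p(c)}$, namely that a solution exists iff $\gcd(s, \ord_p(c)) \mid r$; since $s$ is a power of two and $r$ is odd, this fails exactly when $2 \mid \ord_p(c)$. You instead route the argument through Lemma \ref{ord}, computing $\ord_p(a) = d/\gcd(d,s)$ and $\ord_p(b) = d/\gcd(d,r)$ with $d = \ord_p(c)$ and comparing $2$-adic valuations. Both arguments are elementary and fully rigorous; the paper's is marginally shorter, while yours makes the lemma a direct corollary of the general divisibility criterion $\ord_p(b) \mid \ord_p(a)$ and isolates the $2$-adic obstruction explicitly, which arguably makes clearer why the divisible case (where $s$ has an odd prime factor) behaves differently. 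Your closing observations about where the ``power of two'' hypothesis enters and how the strongly even case differs are accurate and match the paper's treatment of Lemmas \ref{divisible} and \ref{strong}.
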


\begin{proof}
Write $a = c^s$ and $b = c^r$ for integers $(r, s) = 1$, and write $a^x \equiv b \pmod{p}$ as $c^{sx - r} \equiv 1 \pmod{p}$. This is insolvable if and only if $sx - r \equiv 0 \pmod{\ord_p(c)}$ is insolvable, which is equivalent to $(s, \ord_p(c)) \nmid r$. By assumption, $s$ is a power of two, so we must have $2 \mid \ord_p(c)$. This is also sufficient for insolvability.
\end{proof}

\begin{lemma}
\label{strong}
Let $(a, b)$ be a strongly even pair, let $c$ be its core, and let $p \nmid 2ab$ be a prime. The equation $a^x \equiv b \pmod{p}$ is insolvable if and only if $2 \mid \ord_p(c^2)$.
\end{lemma}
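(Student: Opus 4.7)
The plan is to mimic the proof of Lemma \ref{even}, but with an extra twist coming from the sign. Writing $a=-c^{s}$ and $b=c^{r}$ with $s=2^{t}\ge 2$ and $r$ odd, the congruence $a^{x}\equiv b \pmod{p}$ becomes $(-1)^{x}c^{sx-r}\equiv 1 \pmod{p}$. I will set $d=\ord_{p}(c)$ and split the analysis according to the $2$-adic valuation of $d$, using that $\ord_{p}(c^{2})$ equals $d$ if $d$ is odd and $d/2$ if $d$ is even.

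First, if $d$ is odd, then $-1\notin\langle c\rangle \pmod{p}$, so any solution forces $x$ to be even; the remaining congruence $sx\equiv r\pmod{d}$ is then solvable because $\gcd(s,d)=1$, and the parity of the solution can be adjusted by adding $d$. Hence the equation is solvable, consistent with $\ord_{p}(c^{2})=d$ being odd. If $d$ is even, write $d=2^{u}m$ with $m$ odd and $u\ge 1$, and use that $-1\equiv c^{d/2}\pmod{p}$ to rewrite the equation as $sx\equiv r+\eps\, d/2 \pmod{d}$, where $\eps\in\{0,1\}$ matches the parity of $x$.

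When $u=1$, taking $\eps=1$ gives $r+d/2=r+m$, which is even and hence divisible by $\gcd(s,d)=2$; the step $d/\gcd(s,d)=m$ is odd, so $x$ can be chosen odd as required, yielding a solution (and $\ord_{p}(c^{2})=m$ is indeed odd). When $u\ge 2$ a parity obstruction appears: $sx$ is always even (since $s$ is), $r$ is odd, and $d/2$ is even, so neither $\eps=0$ nor $\eps=1$ can balance the congruence modulo $2$, hence the equation is insolvable (and $\ord_{p}(c^{2})=d/2$ is even). Combining the three subcases, insolvability is equivalent to $u\ge 2$, which is exactly $2\mid\ord_{p}(c^{2})$.

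The main thing to watch is the bookkeeping of parities: a solution $x$ must not only satisfy the linear congruence modulo $d$ but also have a prescribed parity. This is manageable because in the solvable subcases the step $d/\gcd(s,d)$ turns out to be odd, so any solution can be shifted to one of either parity, while in the insolvable subcase the obstruction is visible already modulo $2$.
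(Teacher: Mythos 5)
Your proof is correct and takes essentially the same route as the paper's: both reduce the question to the linear congruences $sx \equiv r + \eps\, d/2 \pmod{d}$ (with $d = \ord_p(c)$) and settle them by a parity analysis, yours organized by $v_2(d)$ rather than by the parity of $x$. You are in fact somewhat more careful than the paper about verifying that a solution of the right parity for $x$ actually exists in the solvable cases.
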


\begin{proof}
Write $a = -c^s$ and $b = c^r$, so $a^x \equiv b \pmod{p}$ is equivalent with $(-c^s)^x \equiv c^r \pmod{p}$. If $x$ is even, this is equivalent with the insolvability of $c^{sx - r} \equiv 1 \pmod{p}$, which, as in the previous lemma, is equivalent to $2 \mid \ord_p(c)$. If $x$ is odd, this is equivalent with $c^{sx - r} \equiv -1 \pmod{p}$, that is,

$$sx - r \equiv \frac{\ord_p(c)}{2} \pmod{\ord_p(c)}.$$
Using the fact that $s \ge 2$ is a power of two, the insolvability of this is equivalent to $4 \mid \ord_p(c)$, which in turn is equivalent with $2 \mid \ord_p(c^2)$.
\end{proof}

Finally, for divisible pairs we have the following lemma. The proof is so similar to the proofs above that we omit it.

\begin{lemma}
\label{divisible}
Let $(a, b)$ be a divisible pair, and let $c$ be its core. Pick some $p \nmid 2ab$. Write $b = c^r$ and $a = \pm c^s$ with $(r, s) = 1$. The equation $a^x \equiv b \pmod{p}$ is insolvable if (but not only if) $q | \ord_p(c)$ for some odd prime $q | s$.
\end{lemma}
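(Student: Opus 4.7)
The plan is to imitate the proofs of Lemmas \ref{even} and \ref{strong}. Since only the sufficiency direction is asserted, the two sign cases $a = c^s$ and $a = -c^s$ can be handled uniformly: write $a = \epsilon c^s$ with $\epsilon \in \{\pm 1\}$, so that $a^x \equiv b \pmod{p}$ becomes $c^{sx - r} \equiv \epsilon^x \pmod{p}$. Squaring both sides eliminates the sign and gives $c^{2(sx - r)} \equiv 1 \pmod{p}$, so $\ord_p(c) \mid 2(sx - r)$.

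Next I would bring in the hypothesis. Let $q$ be an odd prime dividing both $s$ and $\ord_p(c)$. Then $q \mid 2(sx - r)$, and because $q$ is odd, $q \mid sx - r$. Combined with $q \mid s$ this forces $q \mid r$, contradicting $\gcd(r, s) = 1$. Hence no $x$ can satisfy $a^x \equiv b \pmod{p}$, which is the desired sufficient condition.

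The ``not only if'' caveat in the statement is inherent and requires no separate argument: insolvability can also arise from a common factor of $2$ shared by $s$ and $\ord_p(c)$, exactly as in Lemmas \ref{even} and \ref{strong}, and this is simply not captured by an odd-prime condition. I do not foresee any real obstacle in the proof; the only delicate point is using the oddness of $q$ to cancel the factor of $2$ introduced by the squaring step, so that $\gcd(r,s) = 1$ can produce the contradiction.
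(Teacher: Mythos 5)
Your proof is correct; the paper omits its own proof of this lemma as being ``so similar to the proofs above'', and your argument is precisely such a proof, reducing $a^x \equiv b \pmod{p}$ to $c^{sx-r} \equiv \pm 1 \pmod{p}$ in the spirit of Lemmas \ref{even} and \ref{strong}. The only difference is the squaring step, a minor streamlining that handles both signs of $a$ at once, and you use the oddness of $q$ exactly where it is needed, namely to cancel the resulting factor of $2$ before invoking $\gcd(r,s)=1$.
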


Thus, we have four type of conditions to think about:
\begin{itemize}
\item[(i)] Guarantee the insolvability of $a^x \equiv b \pmod{p}$ for irrational pairs $(a, b)$.
\item[(ii)] For divisible pairs $(a, b)$, guarantee the divisibility of $\ord_p(c)$ by an odd prime $q$, where $c$ is the core of $(a, b)$.
\item[(iii)] For odd pairs $(a, b)$, guarantee the oddness of $\ord_p(c)$, where $c$ is the core of $(a, b)$.
\item[(iv)] For even and strongly even pairs $(a, b)$, guarantee the evenness of $\ord_p(c)$, where $c$ is the core or the square of the core of $(a, b)$.
\end{itemize}

We will see that (i) and (ii) cause no obstructions at all, so we are mainly concerned with (iii) and (iv). We now let $o_1, \ldots , o_O$ denote the integers whose order are required to be odd in (iii) and $e_1, \ldots , e_E$ denote the integers whose orders are required to be even in (iv).

We note right away that if the product $o_1^{x_1} \cdots o_O^{x_O}$ equals $-1$ for some $x_1, \ldots , x_O \in \mathbb{Z}$, then there are only finitely many desired primes. Indeed, as the product of integers having odd order modulo a prime has odd order, all of $\ord_p(o_i)$ being odd would imply $\ord_p(-1)$ being odd, which is not possible for $p > 2$. Thus, we may without loss of generality assume that no product of $o_i$ equals $-1$.

To describe the obstructions of the general case, we need the following lemma \cite[Lemma 5.1]{jarviniemi}.

\begin{lemma}
\label{lemma:oddBasis}
Let $o_1, \ldots , o_O$ be non-zero integers, no product of which equals $-1$. There exists a subset $S$ of $\{o_1, \ldots , o_O\}$ with the following properties.
\begin{itemize}
\item The elements of $S$ are multiplicatively independent (i.e. there is no product of elements of $S$ equal to $1$ except for the empty product).
\item For any $1 \le i \le O$ there exists an odd integer $x$ and a function $f : S \to \mathbb{Z}$ with
$$o_i^x = \prod_{s \in S} s^{f(s)}.$$
\end{itemize}
\end{lemma}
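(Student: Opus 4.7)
The plan is to translate the problem into linear algebra over $\mathbb{F}_2$ after passing to the multiplicative subgroup $G \le \mathbb{Q}^*$ generated by $o_1,\ldots,o_O$. First I would observe that the torsion subgroup of $\mathbb{Q}^*$ is $\{\pm 1\}$, so the hypothesis that no product of $o_i$'s equals $-1$ is equivalent to $G$ being torsion-free. Being finitely generated and torsion-free, $G$ is free abelian of some finite rank $d$.

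Next I would look at the reduction map $\pi : G \twoheadrightarrow G/G^2 \cong \mathbb{F}_2^d$. Since the $o_i$ generate $G$, the classes $\pi(o_i)$ span the $\mathbb{F}_2$-vector space $G/G^2$, so one can extract $S \subseteq \{o_1,\ldots,o_O\}$ of size $d$ such that $\{\pi(s) : s \in S\}$ is an $\mathbb{F}_2$-basis. Multiplicative independence of $S$ then follows by a standard divide-by-gcd argument: any nontrivial relation $\prod_{s \in S} s^{f(s)} = 1$, after dividing through by $\gcd\{f(s) : s \in S\}$, must have some odd exponent, and so descends to a nontrivial $\mathbb{F}_2$-relation among the $\pi(s)$, contradicting their basis property.

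For the odd-power condition I would analyze $L_S := \langle s : s \in S \rangle \le G$. By the independence just shown, $L_S$ is free of rank $d$, so $[G : L_S]$ is finite. The equality $\pi(L_S) = G/G^2$ says $L_S + G^2 = G$, equivalently $G/L_S = 2(G/L_S)$. A finitely generated abelian group $H$ with $H = 2H$ is necessarily finite of odd order (its free part must vanish, and on its finite torsion part multiplication by $2$ is surjective hence bijective), so $n := [G : L_S]$ is odd. Then $o_i^n \in L_S$ for every $i$, which yields the desired identity $o_i^n = \prod_{s \in S} s^{f(s)}$ with some integer coefficients $f(s)$. The argument is mostly routine; the real content is the first step, recognizing that $-1 \notin G$ is exactly what lets us treat $G$ as a free abelian group and reduce cleanly modulo squares.
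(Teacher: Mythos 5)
Your argument is correct and complete. Note that this paper does not actually contain a proof of the lemma: it is quoted from \cite[Lemma 5.1]{jarviniemi}, so there is no in-paper argument to compare against. On its own merits, your proof works: the hypothesis that no product $o_1^{x_1}\cdots o_O^{x_O}$ with $x_i \in \mathbb{Z}$ equals $-1$ (this is the intended reading, as the paper's preceding discussion makes explicit, and the lemma is false under the nonnegative-exponent reading, e.g.\ for $\{2,-2\}$) is exactly the statement that the group $G$ generated by the $o_i$ misses the torsion of $\mathbb{Q}^*$, hence is free abelian of finite rank $d$; extracting $S$ so that its image is an $\mathbb{F}_2$-basis of $G/G^2$ gives multiplicative independence (your divide-by-gcd step silently uses torsion-freeness of $G$ to justify that $\prod s^{f(s)/g}=1$ still holds, which is worth one sentence); and $L_S+2G=G$ forces the finite group $G/L_S$ to have odd order $n$, so $o_i^n\in L_S$ as required. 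This is a clean structural proof; one could alternatively argue more concretely with exponent vectors over the primes together with a sign coordinate, but nothing in your route needs repair.
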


This allows us to reduce to the case when $o_1, \ldots ,o_O$ are multiplicatively independent:

\begin{lemma}
\label{lemma:multInd}
Let $o_1, \ldots , o_O$ be non-zero integers, no product of which equals $-1$. Let $S$ be a set as in Lemma \ref{lemma:oddBasis} and let $p$ be a prime (not dividing any of $o_1, \ldots , o_O$). The orders $\ord_p(o_i)$ are all odd if and only if $\ord_p(s)$ is odd for all $s \in S$.
\end{lemma}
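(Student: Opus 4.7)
The plan is to prove the two implications of the ``if and only if''. The forward direction ($\ord_p(o_i)$ all odd implies $\ord_p(s)$ odd for $s \in S$) is immediate, since Lemma \ref{lemma:oddBasis} guarantees $S \subseteq \{o_1, \ldots, o_O\}$.

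For the reverse direction, I would fix $i$ and apply the second property of Lemma \ref{lemma:oddBasis} to produce an odd integer $x$ and a function $f : S \to \mathbb{Z}$ with
$$o_i^x = \prod_{s \in S} s^{f(s)}.$$
Reducing modulo $p$ and using the standard fact that the order of a product (or an integer power) of residues having odd orders is again odd --- since each individual order divides an odd integer, and the order of a product divides the lcm of the orders --- the right-hand side has odd order modulo $p$. Therefore $\ord_p(o_i^x)$ is odd.

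To conclude, I would use the identity $\ord_p(o_i^x) = \ord_p(o_i)/\gcd(\ord_p(o_i), x)$. Since $x$ is odd, $\gcd(\ord_p(o_i), x)$ is odd, so the $2$-adic valuation of the quotient equals the $2$-adic valuation of $\ord_p(o_i)$. As the quotient is odd, $\ord_p(o_i)$ itself must be odd, which finishes the proof.

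There is no serious obstacle here: the content of the lemma is essentially the observation that raising to an odd power preserves the parity of the order, combined with the multiplicativity of ``having odd order'' for products. The real work was already done in Lemma \ref{lemma:oddBasis}, which supplies the multiplicatively independent set $S$ together with the crucial oddness of the exponent $x$ in the representation of each $o_i$.
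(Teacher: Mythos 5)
Your proof is correct and follows essentially the same route as the paper's: the forward direction is immediate from $S \subseteq \{o_1, \ldots, o_O\}$, and the reverse direction uses the odd-exponent representation from Lemma \ref{lemma:oddBasis} together with the facts that products of odd-order elements have odd order and that raising to an odd power preserves the parity of the order. You merely spell out the final step (via $\ord_p(o_i^x) = \ord_p(o_i)/\gcd(\ord_p(o_i), x)$) in more detail than the paper does.
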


\begin{proof}
``Only if'' is clear. ``If'': Let $1 \le i \le O$ and write
$$o_i^x = \prod_{s \in S} s^{f(s)}$$
with $x$ odd. The order of the right hand side modulo $p$ is odd, so $\ord_p(o_i^x)$ is odd and thus $\ord_p(o_i)$ is odd.
\end{proof}

From now on we assume that the numbers $o_i$ are multiplicatively independent.

Let $A$ be the subset of elements $a \in \{e_1, \ldots , e_E\}$ such that there exist a vector $f(a) = (f(a)_1, f(a)_2, \ldots , f(a)_O)$ of rational numbers satisfying
\begin{align}
\label{eq:eq1}
a = \prod_{i = 1}^O o_i^{f(a)_i}.
\end{align}
By multiplicative independence, this vector is unique. Let $B$ be the rest of $\{e_1, \ldots , e_E\}$.

Obviously if $f(a)$ consists of integers for some $a \in A$, there are only finitely many desired primes, but there are some other obstructions as well.

\section{Results}

The main result gives a complete characterization for the simultaneous insolvability of exponential congruences.

\begin{theorem}
\label{thm:main}
Let $a_i^x \equiv b_i \pmod{p}, 1 \le i \le n$ be a set of exponential congruences. Assume no pair $(a_i, b_i)$ is trivial. As in Section \ref{sec:elementary}, let $o_1, \ldots , o_O$ be the integers whose orders are required to be odd, which may be taken to be multiplicatively independent, and let $e_1, \ldots , e_E$ be the integers whose orders are required to be even. Partition the set $\{e_1, \ldots , e_E\}$ into $A$ and $B$ as in Section \ref{sec:elementary}, and let $f(a)$ be defined as in \eqref{eq:eq1}. Let $M \in \mathbb{Z}_+$ be such that the denominator of $2^Mf(a)_i$ is odd (when written in its lowest terms) for all $a \in A, 1 \le i \le O$.  

There are infinitely many primes $p$ such that none of $a_i^x \equiv b_i \pmod{p}$ are solvable if and only if the system 
$$\sum_{i = 1}^O 2^Mf(a)_ix_i \not\equiv 0 \pmod{2^M}, a \in A$$
of incongruences has an integer solution $(x_1, \ldots , x_O)$.

Furthermore, if there are infinitely many such primes, then their lower density is positive.
\end{theorem}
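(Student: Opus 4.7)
The plan is to apply the Chebotarev density theorem to a suitable Kummer--cyclotomic extension of $\mathbb{Q}$. First, I would handle the conditions from types (i) and (ii): the insolvability of $a^x \equiv b \pmod p$ for an irrational pair and the divisibility $q \mid \ord_p(c)$ for a divisible pair can each be imposed via splitting conditions on fields like $\mathbb{Q}(\zeta_q, c^{1/q})$, which are essentially disjoint from the $2$-power Kummer tower governing the parity conditions. These impose only a positive constant factor on the density and no combinatorial obstruction, so the problem reduces to producing infinitely many primes $p$ with $\ord_p(o_i)$ odd for all $i$ and $\ord_p(e_j)$ even for all $j$.

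Fix $N \geq M$ and consider $L = \mathbb{Q}(\zeta_{2^{N+1}}, o_1^{1/2^N}, \ldots, o_O^{1/2^N}, e_1^{1/2^N}, \ldots, e_E^{1/2^N})$. For a prime $p$ unramified in $L$, record the Frobenius $\sigma_p$ by $\sigma_p(o_i^{1/2^N}) = \zeta_{2^N}^{\alpha_i} \cdot o_i^{1/2^N}$ and $\sigma_p(e_j^{1/2^N}) = \zeta_{2^N}^{\beta_j} \cdot e_j^{1/2^N}$; the parameter $v = v_2(p-1)$ is read off from $\sigma_p|_{\mathbb{Q}(\zeta_{2^{N+1}})}$. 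A short computation with a primitive root modulo $p$ gives that $\ord_p(o_i)$ is odd if and only if $v_2(\alpha_i) \geq v$, and $\ord_p(e_j)$ is even if and only if $v_2(\beta_j) < v$. Kummer theory identifies $\Gal(L/\mathbb{Q}(\zeta_{2^{N+1}}))$ with characters on the subgroup of $\mathbb{Q}^* / (\mathbb{Q}^*)^{2^N}$ generated by $\pm 1, o_i, e_j$. For $e_j \in B$ this generator is independent of the $o_i$, so $\beta_j$ is free and the evenness condition is no obstacle. For $e_j \in A$, the defining relation $e_j^s = \pm \prod o_i^{t_i}$ translates into the linear constraint $s \beta_j \equiv \sum_i t_i \alpha_i \pmod{2^N}$.

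Specialising to primes with $v = M$ and writing $\alpha_i = 2^M x_i$ with $x_i \in \mathbb{Z}/2^{N-M}$, the Kummer constraint becomes $s \beta_j \equiv 2^M \sum_i t_i x_i \pmod{2^N}$. Using $t_i = s f(a)_i$ and dividing by the odd part of $s$ (which is a $2$-adic unit), the condition $v_2(\beta_j) < M$ simplifies precisely to $\sum_i 2^M f(a)_i x_i \not\equiv 0 \pmod{2^M}$. Hence the existence of a Frobenius realising all order conditions is equivalent to the incongruence system admitting an integer solution $(x_1, \ldots, x_O)$. Applying Chebotarev to the corresponding union of conjugacy classes yields a set of primes with positive natural density (hence positive lower density) when the system is solvable, and only finitely many good primes when it is not. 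The principal obstacle is the $2$-adic bookkeeping linking the Kummer relation for $\beta_j$ under the constraint $v_2(\alpha_i) \geq v$ with the clean incongruence system modulo $2^M$, in particular handling the rational exponents $f(a)_i$ whose $2$-adic denominators may be as large as $2^M$, together with confirming that the auxiliary fields for (i) and (ii) are linearly disjoint from $L$ so that Chebotarev produces the claimed multiplicative density.
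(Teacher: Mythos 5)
Your overall strategy---encoding the parity conditions in the Frobenius data $(\alpha_i,\beta_j,v)$ of a $2$-power Kummer--cyclotomic extension, translating the multiplicative relations defining $A$ into linear constraints on that data, and applying Chebotarev---is the same as the paper's (Section \ref{sec:necessity} runs the computation with discrete logarithms, Section \ref{sec:sufficiency} with Kummer theory), and your $2$-adic bookkeeping for the $A$-relations is correct in substance.

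There is, however, a genuine gap in the treatment of $B$ combined with the choice $v=M$. The exponents $\beta_j$ for $e_j\in B$ are not jointly free: elements of $B$ may be multiplicatively dependent on one another and on $A\cup\{o_1,\ldots,o_O\}$, so all of these relations constrain the $\beta_j$, not only the ones you record for $A$. Worse, fixing $v_2(p-1)=M$ can make the $B$-conditions unsatisfiable even when the theorem's system is solvable. Take $O=0$, $A=\emptyset$, $B=\{2,3,6\}$ (arising from the even pairs $(4,2),(9,3),(36,6)$); then $M=1$ is admissible and the incongruence system is empty, so the theorem asserts infinitely many $p$ with $\ord_p(2),\ord_p(3),\ord_p(6)$ all even. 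But for $v=1$, i.e. $p\equiv 3\pmod 4$, this would force $2$, $3$ and $6$ to be simultaneously quadratic nonresidues, which is impossible by multiplicativity of the Legendre symbol; one must go to $v\ge 2$. This is exactly why the paper works with $v_2(p-1)=v_2(N)+k$ for $k\to\infty$, extends $\{o_1,\ldots,o_O\}$ to a multiplicative basis $S_1$ of the group generated by all the $e_j$ and $o_i$, assigns the basis elements outside $\{o_1,\ldots,o_O\}$ random exponents, and shows each $b\in B$ fails to be a $2^{v}$th power with probability tending to $1$, so a union bound yields a valid Frobenius for $k$ large; your argument needs some version of this ``extra room at level $k$''. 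Two smaller points: the fields $\mathbb{Q}(\zeta_q,c^{1/q})$ you propose for divisible pairs need not be linearly disjoint from $L$ when $c$ shares prime factors with the $o_i,e_j$ (quadratic subfields of cyclotomic fields cause entanglement), which is why the paper folds the cores $c_i$ into the same Kummer field; and your equivalence is derived only for primes with $v=M$, whereas necessity must treat good primes with arbitrary $v=v_2(p-1)$---the same computation works, since for every $v$ the condition reads $v_2\bigl(\sum_i f(a)_i(\alpha_i/2^v)\bigr)<0$, which is equivalent to the stated system modulo $2^M$, but this should be said.
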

We therefore see that the simultaneous insolvability of exponential congruences translates to the solvability of simultaneous linear incongruences.

Note that irrational and divisible pairs do not affect the infinitude of the primes at hand.

\begin{corollary}
\label{cor:noEffect}
Let $n$ be a positive integer, and let $a_1, \ldots , a_n, b_1, \ldots , b_n$ be integers. Assume that there are infinitely many primes $p$ such that none of $a_i^x \equiv b_i \pmod{p}, 1 \le i \le n$ are solvable. Let $(a_{n+1}, b_{n+1})$ be a pair of integers which is either irrational or divisible. Then there are infinitely many primes $p$ such that none of $a_i^x \equiv b_i \pmod{p}, 1 \le i \le n+1$ are solvable. Furthermore, the lower density of such primes exists and is positive.
\end{corollary}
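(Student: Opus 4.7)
The plan is to derive the corollary directly from Theorem \ref{thm:main} by observing that an irrational or divisible pair contributes nothing to the system of linear incongruences governing infinitude.

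First I would reduce to the case where none of the original pairs $(a_1,b_1),\dots,(a_n,b_n)$ is trivial. A trivial pair with $b_i=a_i^k$ would make $a_i^x\equiv b_i\pmod p$ solvable for all but finitely many $p$, contradicting the hypothesis; any other trivial pair is insolvable for all but finitely many $p$ and may be discarded without affecting the set of primes in question up to a finite set. After this reduction, Theorem \ref{thm:main} applies to the $n$-system, and the hypothesis is equivalent to the associated incongruence system
$$\sum_{i=1}^{O} 2^M f(a)_i\, x_i \not\equiv 0 \pmod{2^M},\qquad a\in A,$$
having an integer solution $(x_1,\dots,x_O)$.

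Next I would adjoin $(a_{n+1},b_{n+1})$ and compare the data of the two systems. Since the new pair is irrational or divisible (and therefore neither trivial, odd, even, nor strongly even), the classification (i)--(iv) from Section \ref{sec:elementary} shows that it contributes only a condition of type (i) or type (ii): it adds no integer either to the list $o_1,\dots,o_O$ whose orders must be odd, or to the list $e_1,\dots,e_E$ whose orders must be even. Consequently, the multiplicatively independent basis, the partition $A\cup B$, the rational vectors $f(a)$, and the integer $M$ attached to the $(n+1)$-system coincide with those of the $n$-system, and the two systems of incongruences are literally identical.

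Finally, the sufficiency direction of Theorem \ref{thm:main} applied to the $(n+1)$-system then produces infinitely many primes $p$, with positive lower density, for which all of $a_i^x\equiv b_i\pmod p$, $1\le i\le n+1$, are simultaneously insolvable. The only step requiring genuine care is the classification assertion that irrational and divisible pairs feed exclusively into conditions (i) and (ii) — this is the content of Lemmas \ref{ord} and \ref{divisible} combined with the fact that insolvability for an irrational pair is governed by power-residue conditions disjoint from the parity data encoded in $\{o_i\}$ and $\{e_j\}$, and this is precisely the point where Theorem \ref{thm:main}'s setup is designed to absorb (i) and (ii) without adding new incongruences.
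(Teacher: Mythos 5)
Your proposal is correct and follows exactly the route the paper intends: the corollary is stated as an immediate consequence of Theorem \ref{thm:main} precisely because an irrational or divisible pair adds only conditions of type (i) or (ii), leaving the lists $o_1,\dots,o_O$ and $e_1,\dots,e_E$ and hence the incongruence system unchanged. Your additional care in discarding trivial pairs from the original list (so that Theorem \ref{thm:main} applies) is a reasonable and correct way to handle a hypothesis the paper leaves implicit.
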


Another corollary is that for positive $a_i, b_i$, the only cases when there are only finitely many desired primes are the trivial ones.

\begin{corollary}
\label{cor:positive}
Let $a_1, \ldots , a_n, b_1, \ldots , b_n$ be integers greater than $1$. Assume that $b_i$ is not a power of $a_i$ for any $i$. There are infinitely many primes $p$ such that none of the equations $a_i^{x_i} \equiv b_i \pmod{p}$ are solvable. Furthermore, the lower density of such primes exists and is positive.
\end{corollary}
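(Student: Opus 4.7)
The plan is to reduce directly to Theorem \ref{thm:main}, whose hypothesis I expect to be trivially satisfied here. First I would classify each pair $(a_i, b_i)$ according to the definitions of Section \ref{sec:elementary}. By assumption no pair is trivial, and since $a_i, b_i > 1$ are in particular positive, no pair can be odd (which would require $b_i = -a_i^k < 0$) or strongly even (which would require $b_i^s = -a_i^r$, impossible as the left side is positive). Hence every pair is irrational, divisible, or even.

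Next I would use the fact that the integers $o_1, \ldots , o_O$ whose orders are required to be odd arise only from odd pairs (condition (iii) of Section \ref{sec:elementary}), so $O = 0$. An element $a \in \{e_1, \ldots , e_E\}$ lies in the set $A$ of Theorem \ref{thm:main} precisely when it is expressible as a product of rational powers of the $o_i$, and with $O = 0$ the only such element is the empty product $1$. But each $e_j$ is the core of an even pair $(a_i, b_i)$: writing $a_i = e_j^s$ and $b_i = e_j^r$ with $s \ge 2$ a power of two and $\gcd(r, s) = 1$, so that $r$ is odd, the condition $b_i > 0$ combined with the oddness of $r$ forces $e_j > 0$, and then $a_i > 1$ forces $e_j > 1$. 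Therefore no $e_j$ equals $1$, so $A = \emptyset$.

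Since $A$ is empty, the system of incongruences in Theorem \ref{thm:main} is vacuous and every integer tuple solves it. Theorem \ref{thm:main} then yields the desired infinitude of primes together with positive lower density. There is no real obstacle beyond pure bookkeeping within the classification of Section \ref{sec:elementary}; the essential point is that the positivity of every $a_i, b_i$ rules out those pair types that would create odd-order requirements, which in turn collapses $A$ to the empty set and renders the key condition of Theorem \ref{thm:main} vacuously satisfied.
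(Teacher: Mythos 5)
Your proposal is correct and follows essentially the same route as the paper: observe that positivity of the $a_i, b_i$ rules out odd (and strongly even) pairs, so $O=0$, $A=\emptyset$, and the incongruence system of Theorem \ref{thm:main} is vacuously solvable. Your additional check that no core $e_j$ equals $1$ (so that $A$ is genuinely empty) is a worthwhile detail the paper leaves implicit, but it is not a different argument.
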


\begin{proof}
Note that none of the pairs $(a_i, b_i)$ are odd, so the conditions of Theorem \ref{thm:main} are trivially satisfied as long as no pair $(a_i, b_i)$ is trivial.
\end{proof}

We then make a couple of comments on the system in Theorem \ref{thm:main}. The numbers $f(a)_i2^M$ are rational numbers whose denominators are odd, so they may be viewed as integers modulo $2^M$. However, they are not necessarily zero modulo $2^M$ due to cancellations, e.g. if $f(a)_i = 1/2^M$.

As an example, if $\ord_p(4)$ and $\ord_p(9)$ are required to be odd, and $\ord_p(2), \ord_p(3)$ and $\ord_p(6)$ are required to be even, we obtain the system
$$x_1 \not\equiv 0, x_2 \not\equiv 0, x_1 + x_2 \not\equiv 0 \pmod{2}.$$
This system has no solutions, and hence there are only finitely many desired primes. The idea is that if $p$ is a prime with $v_2(p-1) = k$, then $\ord_p(4)$ and $\ord_p(9)$ being odd implies that $2$ and $3$ are perfect $2^{k-1}$th powers modulo $p$. One sees (cf. multiplicativity of Legendre's symbol) that this implies that at least one of $2, 3$ and $6$ is a perfect $2^k$th power modulo $p$, implying that at least one of $\ord_p(2), \ord_p(3), \ord_p(6)$ is odd.

If one can choose $M = 1$, one obtains a system of linear equations over the finite field of two elements, namely that some linear combinations of $x_i$ should equal $1$. The solvability of such a system is well understood: there exists a solution if and only if there is no odd number of the linear combinations of $x_i$ whose sum vanishes.

For higher values of $M$ it seems that there is no as simple of a criterion for the existence of a solution. Already the solvability of a system of linear equations is more difficult over rings than over fields. The solvability of the system of incongruences in Theorem \ref{thm:main} corresponds to the solvability of at least one of $(2^M - 1)^{|A|}$ systems of linear congruences.

We then prove the following result on divisibility conditions imposed on orders.

\begin{theorem}
\label{thm:gcd}
Let $a_i, g_i, m_i, 1 \le i \le n$ be non-zero integers with $1 \le g_i \mid m_i$ for all $i$. The following are equivalent.
\begin{itemize}
\item[(i)] There are infinitely many primes $p$ such that $\gcd(\ord_p(a_i), m_i) = g_i$ for all $i$.
\item[(ii)] For any prime $q$ the following holds: There are infinitely many primes $p$ such that $\gcd(\ord_p(a_i), q^{v_q(m_i)}) = q^{v_q(g_i)}$ for all $i$.
\end{itemize}
Furthermore, if there are infinitely many such primes, their density exists and is positive.
\end{theorem}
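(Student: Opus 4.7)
\emph{(i) $\Rightarrow$ (ii)} is immediate: passing to $q$-adic parts in $\gcd(\ord_p(a_i), m_i) = g_i$ gives $\gcd(\ord_p(a_i), q^{v_q(m_i)}) = q^{v_q(g_i)}$ for every prime $q$ and every $i$.

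For \emph{(ii) $\Rightarrow$ (i)} the plan is to apply the Chebotarev density theorem to a compositum of Kummer extensions, one per relevant prime. Only primes $q$ dividing $M := \mathrm{lcm}(m_1,\ldots,m_n)$ contribute a nontrivial condition; let $Q$ denote this finite set. For each $q \in Q$, set $N_q = 1 + \max_i v_q(m_i)$ and put $K_q = \mathbb{Q}(\zeta_{q^{N_q}}, a_1^{1/q^{N_q}}, \ldots, a_n^{1/q^{N_q}})$. For all but finitely many $p$, the condition ``$\gcd(\ord_p(a_i), q^{v_q(m_i)}) = q^{v_q(g_i)}$ for all $i$'' depends only on the conjugacy class of $\mathrm{Frob}_p$ in $\Gal(K_q/\mathbb{Q})$: the action on $\zeta_{q^{N_q}}$ pins down $p \Mod{q^{N_q}}$ (hence $v_q(p-1)$), while the action on each $a_i^{1/q^{N_q}}$ pins down the largest $q^k$ for which $a_i$ is a $q^k$-th power modulo $p$, and together these determine $v_q(\ord_p(a_i))$. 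Let $C_q \subseteq \Gal(K_q/\mathbb{Q})$ be the resulting conjugation-invariant ``good'' subset; by Chebotarev on $K_q$, hypothesis (ii) is equivalent to $C_q \neq \emptyset$ for every $q \in Q$.

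Setting $K = \prod_{q \in Q} K_q$, the condition in (i) is $\mathrm{Frob}_p|_{K_q} \in C_q$ for all $q \in Q$, and Chebotarev on $K$ gives that such primes have density
$$\frac{|\{\sigma \in \Gal(K/\mathbb{Q}) : \sigma|_{K_q} \in C_q \text{ for every } q \in Q\}|}{[K:\mathbb{Q}]}.$$
Existence of the density is therefore automatic, and it remains to show the numerator is nonzero. For this I would verify that the $K_q$ are essentially linearly disjoint: for distinct $q, q' \in Q$, the intersection $K_q \cap K_{q'}$ is contained in a fixed abelian subfield $F \subseteq \mathbb{Q}^{\mathrm{ab}}$ coming from cyclotomic entanglement of the two Kummer towers. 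Given any tuple $(\sigma_q)_{q \in Q}$ with $\sigma_q \in C_q$ whose restrictions to $F$ are compatible, one can glue them to a single $\sigma \in \Gal(K/\mathbb{Q})$; compatibility on $F$ reduces to a Chinese Remainder Theorem statement about $p$ modulo the various $q^{N_q}$, which is trivially satisfiable because the conditions in $C_q$ place no restriction on $p$ modulo primes different from $q$. This yields a $\sigma$ in the numerator, completes the equivalence, and makes positivity of the density visible.

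The main obstacle is making the ``essentially linearly disjoint'' step precise, i.e., controlling $K_q \cap K_{q'}$. The worry is that an unexpected abelian subextension creeps into a Kummer tower, for instance through roots of unity of orders other than powers of $q$ appearing inside $\mathbb{Q}(\zeta_{q^{N_q}}, a_i^{1/q^{N_q}})$, or through multiplicative relations among the $a_i$. These are controlled by standard Kummer-theoretic inputs (descriptions of when $\mathbb{Q}(\zeta_n, a^{1/n})$ contains unexpected abelian pieces, as in Schinzel's work), but they require particular care at $q=2$ when some $a_i$ are negative. Once all such entanglements are folded into the definition of $F$ (if necessary by enlarging $N_q$ slightly), the gluing argument goes through and yields both the equivalence (i)$\Leftrightarrow$(ii) and the existence and positivity of the density.
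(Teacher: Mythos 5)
Your direction (i) $\Rightarrow$ (ii) is fine, but the converse has two genuine gaps. First, the set $C_q$ is not well defined: the condition $\gcd(\ord_p(a_i), q^{v_q(m_i)}) = q^{v_q(g_i)}$ is \emph{not} determined by the conjugacy class of $\mathrm{Frob}_p$ in the fixed field $K_q = \mathbb{Q}(\zeta_{q^{N_q}}, a_i^{1/q^{N_q}})$. Indeed, $v_q(\ord_p(a_i)) = v_q(p-1) - t$, where $t$ is the largest exponent with $a_i$ a $q^t$-th power modulo $p$; since $v_q(p-1)$ is unbounded over the primes $p$, deciding whether $v_q(\ord_p(a_i)) \ge v_q(g_i)$ (equivalently, whether $a_i$ is a $q^{v_q(p-1)-v_q(g_i)+1}$-th power modulo $p$) requires Kummer data at arbitrarily high level, not just level $N_q$. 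Consequently your density formula $|C|/[K:\mathbb{Q}]$ is not correct either: the set in (i) is a countable union of Chebotarev conditions in fields of growing level (this is exactly why the paper only claims positive lower density from the main argument and defers existence of the density to a separate limiting argument). Second, your resolution of the entanglement is wrong as stated: it is false that ``the conditions in $C_q$ place no restriction on $p$ modulo primes different from $q$.'' Already for $q=2$, the condition that $a_i$ be a square modulo $p$ is, by quadratic reciprocity, a congruence condition on $p$ modulo $4|a_i|$, i.e., modulo the odd primes dividing $a_i$ --- which may well lie in your set $Q$. So the compatibility on $F$ is a genuine constraint, not a trivially satisfiable CRT statement, and your gluing step does not go through.

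The paper's proof circumvents both problems with a device absent from your proposal. For each $q$, hypothesis (ii) supplies an actual prime $p$; one reads off its Frobenius data $(x, x_1, \ldots, x_m)$ at level $k = v_q(p-1)$ in $\mathbb{Q}(\zeta_{2q^{k+1}}, \ell_1^{1/q^k}, \ldots, \ell_m^{1/q^k})$ (working with the primes $\ell_j$ dividing the $a_i$, so that the radicands are multiplicatively independent), encoded as the valuation conditions \eqref{eq:1}--\eqref{eq:2}. The key observation is that the substitution $x-1 \to q(x-1)$, $x_i \to qx_i$, $k \to k+1$ preserves these conditions. Iterating it pushes each $q$-condition to a level where one may additionally impose $N \mid x-1, x_1, \ldots, x_m$ for the uniform constant $N$ of Proposition \ref{prop:kummerGalois}; that divisibility trivializes the prescribed action on the entangled (non-maximal) part of the Kummer tower, so the conditions for distinct $q$ can be combined by the Chinese remainder theorem into a single automorphism of one field $\mathbb{Q}(\zeta_{2NP}, \ell_1^{1/NP}, \ldots, \ell_m^{1/NP})$, to which Chebotarev is applied once. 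Without this lifting step (or an equivalent), your argument cannot convert the hypothesis (ii) into compatible local data, and the theorem remains unproved.
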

Note that the condition in (ii) is interesting for only finitely many $q$. One can give a characterization for the satisfiability of these conditions similarly as in Theorem \ref{thm:main} in terms of systems of linear incongruences, but we do not state this result explicitly here.

As seen from Theorem \ref{thm:main}, the necessary and sufficient condition for the satisfiability of (in)divisibility conditions is not a very simple one. However, we do have the following result.

\begin{theorem}
\label{thm:div}
Let $a_i, m_i, 1 \le i \le n$ be non-zero integers with $|a_i| > 1$ for all $i$. There are infinitely many primes $p$ such that $m_i \mid \ord_p(a_i)$ for all $i$. Furthermore, the density of such primes exists and is positive.
\end{theorem}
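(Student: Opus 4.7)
The plan is to reduce to prime-power moduli via Theorem \ref{thm:gcd} and then, for each prime $q$ separately, to use Kummer theory together with Chebotarev's density theorem to produce enough primes. Applying Theorem \ref{thm:gcd} with $g_i = m_i$ reinterprets the condition $m_i \mid \ord_p(a_i)$ as $\gcd(\ord_p(a_i), m_i) = m_i$, so that by the equivalence (i) $\Leftrightarrow$ (ii) of that theorem it suffices to show that for every prime $q$ there are infinitely many primes $p$ satisfying $q^{k_i} \mid \ord_p(a_i)$ for all $i$, where $k_i = v_q(m_i)$. Only the finitely many $q \mid \mathrm{lcm}(m_1, \ldots, m_n)$ give nontrivial constraints.

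Now fix such a prime $q$ and a large parameter $K$ (to be determined). Consider the Kummer extension $L = \mathbb{Q}(\zeta_{q^K}, a_1^{1/q^K}, \ldots, a_n^{1/q^K})$ and let $H = \Gal(L/\mathbb{Q}(\zeta_{q^K}))$. The Kummer pairing embeds $H$ into $\prod_{i=1}^n \mathbb{Z}/q^K\mathbb{Z}$ via $\sigma \mapsto (t_1(\sigma), \ldots, t_n(\sigma))$ where $\sigma(a_i^{1/q^K}) = \zeta_{q^K}^{t_i(\sigma)} a_i^{1/q^K}$. Passing to the residue field of a prime $p$ unramified in $L$ with $\mathrm{Frob}_p \in H$ (equivalently $p \equiv 1 \pmod{q^K}$), the identity $a_i^{(p-1)/q^K} \equiv \zeta_{q^K}^{t_i(\mathrm{Frob}_p)} \pmod p$ yields, whenever $v_q(t_i(\mathrm{Frob}_p)) < K$, the formula $v_q(\ord_p(a_i)) = v_q(p-1) - v_q(t_i(\mathrm{Frob}_p))$. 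In particular, $q^{k_i} \mid \ord_p(a_i)$ for all $i$ as soon as $v_q(t_i(\mathrm{Frob}_p)) \leq K - k_i$ for every $i$.

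To produce $\sigma \in H$ with $v_q(t_i(\sigma)) \leq K - k_i$ for all $i$, I would run a counting argument. The "bad" set $B_i = \{\sigma \in H : v_q(t_i(\sigma)) > K - k_i\}$ is the kernel of the composite $H \xrightarrow{t_i} \mathbb{Z}/q^K\mathbb{Z} \twoheadrightarrow \mathbb{Z}/q^{K-k_i+1}\mathbb{Z}$, so $|B_i|/|H|$ is the reciprocal of the image size. This image has size $q^{K-k_i+1-e_i}$, where $e_i$ is the $q$-divisibility of $a_i$ in $\mathbb{Q}(\zeta_{q^K})^*/(\mathbb{Q}(\zeta_{q^K})^*)^{q^K}$; crucially, since $|a_i| > 1$, this $e_i$ is bounded independently of $K$. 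Hence $|B_i|/|H| = q^{e_i + k_i - 1 - K}$, and choosing $K > \max_i(e_i + k_i) + \log_q n$ yields $\bigcup_i B_i \subsetneq H$, producing the desired $\sigma$. Chebotarev's density theorem applied to $L/\mathbb{Q}$ then supplies primes $p$ of positive density with $\mathrm{Frob}_p$ in the conjugacy class of $\sigma$; the condition $v_q(t_i) \leq K - k_i$ is conjugation-invariant, since conjugating $\sigma$ by any $\tau \in \Gal(L/\mathbb{Q})$ with $\tau(\zeta_{q^K}) = \zeta_{q^K}^c$ rescales each $t_i(\sigma)$ by the unit $c \in (\mathbb{Z}/q^K)^*$. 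Intersecting the resulting sets of primes over the finitely many relevant $q$ delivers the full theorem with positive density.

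The main obstacle I anticipate is controlling $e_i$ uniformly in $K$. For $q$ odd this is routine: the natural map $\mathbb{Q}^*/(\mathbb{Q}^*)^{q^K} \to \mathbb{Q}(\zeta_{q^K})^*/(\mathbb{Q}(\zeta_{q^K})^*)^{q^K}$ is injective, so $e_i = \min_{\ell \mid a_i} v_q(v_\ell(a_i))$ is a finite constant independent of $K$. For $q = 2$ one must handle both the sign of $a_i$ and the fact that $-1 = \zeta_{2^K}^{2^{K-1}}$ is a $2^{K-1}$-th power in $\mathbb{Q}(\zeta_{2^K})^*$; nevertheless, once $K$ exceeds the intrinsic $2$-divisibility of $|a_i|$, these contributions become bounded corrections, $e_i$ stabilizes, and the same counting argument goes through.
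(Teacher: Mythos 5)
Your argument is correct, but it takes a genuinely different route from the paper. The paper's proof is a two-step reduction: first it notes that $\ord_p(a)$ and $\ord_p(-a)$ differ by a factor of $1/2$, $1$ or $2$, so it suffices to force $2m_i \mid \ord_p(|a_i|)$ and one may assume $a_i > 1$; then it encodes each condition $q^k \mid \ord_p(a_i)$ as the \emph{insolvability} of the exponential congruence $(a_i^{q^k})^x \equiv a_i^{q^{k-1}} \pmod p$ (via Lemma \ref{ord}) and invokes Corollary \ref{cor:positive}, so the whole theorem becomes a corollary of Theorem \ref{thm:main}. You instead reduce to one prime $q$ at a time via Theorem \ref{thm:gcd} with $g_i = m_i$ (legitimate and non-circular, since the paper's proof of Theorem \ref{thm:gcd} does not use Theorem \ref{thm:div}) and then run an explicit Kummer--Chebotarev union bound over $H = \Gal(L/\mathbb{Q}(\zeta_{q^K}))$. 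That core computation is sound: the formula $v_q(\ord_p(a_i)) = v_q(p-1) - v_q(t_i(\mathrm{Frob}_p))$, the index computation $|B_i|/|H| = q^{e_i + k_i - 1 - K}$, the conjugation-invariance via $t_i \mapsto c\,t_i$, and the uniform boundedness of $e_i$ (which for $q = 2$ is exactly what Proposition \ref{prop:kummerGalois} packages, with the degree loss of at most $2^m$) are all correct. In effect you re-derive by hand, inside this proof, the counting that the paper delegates to the machinery behind Theorem \ref{thm:main} (compare observations (iv)--(v) in the sufficiency proof); what you gain is a self-contained and quantitative argument, what you lose is brevity. Two small cautions: your closing sentence about ``intersecting the resulting sets of primes over the finitely many relevant $q$'' would be a gap if taken literally, since the Chebotarev conditions for different $q$ live in fields that need not be linearly disjoint --- but you have already correctly delegated the combination step to the implication (ii) $\Rightarrow$ (i) of Theorem \ref{thm:gcd}, so nothing is actually missing; and for indices with $v_q(m_i) = 0$ the set $B_i$ should simply be taken empty rather than defined by the displayed kernel.
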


See the comment below for the existence of the density. Here we prove only that the lower density is positive.

\begin{proof}
Note that $\ord_p(a)$ and $\ord_p(-a)$ differ by a factor of $1/2, 1$ or $2$. Thus, it suffices to guarantee $2m_i \mid \ord_p(|a_i|)$, i.e. we may assume $a_i > 1$ for all $i$.

Let $q$ be a prime and $k \in \mathbb{Z}_+$. The equation
$$(a_i^{q^k})^x \equiv a_i^{q^{k-1}} \pmod{p}$$
is insolvable if and only if $q^k \mid \ord_p(a_i)$. By Corollary \ref{cor:positive} one may simultaneously satisfy any number of such conditions as long as $a_i > 1$ for a set of primes with positive lower density.
\end{proof}

We also mention the following result on the indivisibilities of orders. Note that by Theorem \ref{thm:gcd} we lose no generality by considering indivisibility by a single prime at a time.

\begin{theorem}
\label{thm:inDiv}
Let $a_1, \ldots , a_n$ be non-zero integers and let $q$ be a prime.
\begin{itemize}
\item[(i)] If $q$ is odd, then there are infinitely many primes $p$ such that $q \nmid \ord_p(a_i)$ for all $i$.
\item[(ii)] If $q = 2$, then there are infinitely many primes $p$ such that $q \nmid \ord_p(a_i)$ for all $i$ if and only if there does not exist integers $e_1, \ldots , e_n$ such that
$$\prod_{i = 1}^O a_i^{e_i} = -1.$$
\end{itemize}
Furthermore, if there are infinitely many such primes, then their density exists and is positive.
\end{theorem}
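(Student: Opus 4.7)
Part (i), with $q$ odd, is essentially immediate: since $\ord_p(a_i)$ divides $p-1$, any prime $p$ with $q \nmid p-1$ automatically satisfies $q \nmid \ord_p(a_i)$ for every $i$. Dirichlet's theorem on primes in arithmetic progressions yields that such primes have density $(q-2)/(q-1) > 0$, which is positive for every odd prime $q$.

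For part (ii), the ``only if'' direction is immediate: if $\prod_i a_i^{e_i} = -1$, then for every odd prime $p$ this product has order $2$ modulo $p$, so at least one $\ord_p(a_i)$ must be even, since otherwise $\mathrm{lcm}_i \ord_p(a_i)$, which is a multiple of $\ord_p\bigl(\prod_i a_i^{e_i}\bigr) = 2$, would be odd. For the ``if'' direction, the key observation is that $2 \nmid \ord_p(a_i)$ is equivalent, via Lemma \ref{ord}, to the insolvability of $a_i^x \equiv -1 \pmod{p}$. Thus we are seeking primes $p$ such that none of the congruences $a_i^x \equiv -1 \pmod{p}$ are solvable, which is precisely the framework of Theorem \ref{thm:main}. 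Entries with $a_i = 1$ can be discarded, as $2 \nmid \ord_p(1) = 1$ is automatic, while $a_i = -1$ is ruled out by the product hypothesis. For the remaining $a_i$, each pair $(a_i, -1)$ is odd and non-trivial; no pair is even, strongly even, divisible, or irrational, so in the notation of Theorem \ref{thm:main} we have $E = 0$ and $A = B = \emptyset$, and the system of linear incongruences is vacuously satisfied. Theorem \ref{thm:main} then produces infinitely many primes with positive lower density.

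The remaining subtlety is asserting that the density itself (not merely the lower density delivered by Theorem \ref{thm:main}) exists. I would handle this by a direct Chebotarev argument, which I expect to be the main technical step: after reducing to multiplicatively independent $a_i$ via Lemmas \ref{lemma:oddBasis} and \ref{lemma:multInd}, the set of primes with $\ord_p(a_i)$ odd for all $i$ decomposes as a disjoint union, over $L \geq 1$, of the primes satisfying $v_2(p-1) = L$ together with each $a_i$ being a $2^L$-th power residue mod $p$. Each such subset has a well-defined natural density by Chebotarev applied to the splitting field $\mathbb{Q}(\zeta_{2^{L+1}}, a_1^{1/2^L}, \ldots, a_n^{1/2^L})$; multiplicative independence ensures the Galois group is as large as structurally possible (subject to the standard small-$L$ caveats that one handles by hand), and the resulting densities decay geometrically in $L$, so one obtains absolute convergence to a positive total density. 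The main obstacle here is the careful Galois-group computation and the tail estimate justifying interchange of density and infinite union.
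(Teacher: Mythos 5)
Your proposal is correct, but it takes a genuinely different route from the paper's Section \ref{sec:inDiv}. For part (i) the paper runs the full Kummer-theoretic machinery: it applies Proposition \ref{prop:kummerGalois} to the primes $\ell_1, \ldots, \ell_m$ dividing the $a_i$, picks an automorphism of $\mathbb{Q}(\zeta_{Nq^2}, \ell_1^{1/Nq}, \ldots, \ell_m^{1/Nq})$ fixing each $\ell_i^{1/Nq}$ with $v_q(x-1)=v_q(N)+1$, and invokes Chebotarev so that each $|a_i|$ is a perfect $Nq$th power modulo $p$ while $v_q(p-1)=v_q(N)+1$. Your observation that $q \nmid p-1$ already forces $q \nmid \ord_p(a_i)$, so that Dirichlet alone gives a set of density $(q-2)/(q-1)$, is strictly more elementary and entirely adequate for infinitude and positive lower density; what the paper's heavier argument buys is a template that transfers verbatim to $q=2$ (where the congruence class of $p$ alone cannot help, since $2 \mid p-1$ always). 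For part (ii) you reduce to Theorem \ref{thm:main} via Lemma \ref{ord} and the odd pairs $(a_i,-1)$, with the empty incongruence system; the paper instead gives a self-contained argument (reduce to multiplicatively independent $a_i$ by Lemma \ref{lemma:multInd}, then rerun the odd-$q$ construction with the $a_i$ in place of the $\ell_i$), though it explicitly remarks after the theorem statement that your reduction is available, so this is a legitimate alternative rather than a circularity (Theorem \ref{thm:main}'s proof does not use Theorem \ref{thm:inDiv}). Your ``only if'' argument for (ii) matches the paper's. On the existence of the exact density (as opposed to a positive lower density): the paper itself only sketches this, via the same countable disjoint decomposition over $v_2(p-1)$ and power-residue conditions that you describe, and defers details to the literature; note only that your Dirichlet argument for (i) bounds the lower density but says nothing about existence of the density of the full set, for which you would need the analogous decomposition over $v_q(p-1)$.
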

(The case $q = 2$ is implicitly given by Theorem \ref{thm:main} and the discussion in Section \ref{sec:elementary}.)

In Theorems \ref{thm:gcd}, \ref{thm:div} and \ref{thm:inDiv} one can prove the existence of the density. The idea is that one can express the relevant set of primes as a countable disjoint union of sets of primes with suitable Artin symbols (by controlling divisors of $p-1$ and how perfect powers $a_i$ are modulo $p$). The density of such sets tends to zero, so one can apply the Chebotarev density theorem to a finite number of them to obtain arbitrarily good approximations for the density. We omit a detailed proof, but the reader may find an execution of this idea in a slightly easier case in \cite{lagarias}. See also \cite[Section 8.2]{moree} for more references on divisibilities of orders.

\section{Notation and conventions} \label{sec:notations}

The letter $p$ denotes a (rational) prime.  For an integer $x$ not divisible by $p$ the order $\ord_p(x)$ of $x$ modulo $p$ is the smallest positive integer $e$ such that $x^e \equiv 1 \pmod{p}$. For $x \neq 0$ we denote by $v_p(x)$ the largest $e$ such that $p^e | x$.

By $\zeta_k$ we denote a primitive $k$th root of unity. 

For a Galois extension $K$ of $\mathbb{Q}$ we denote by $\Gal(K/\mathbb{Q})$ its Galois group. For an unramified prime $p$ the Artin symbol of $p$ with respect to $K$ is denoted by
$$\left(\frac{K/\mathbb{Q}}{p}\right).$$

We use the fact that an unramified $p$ splits completely in $K$ if and only if $\left(\frac{K/\mathbb{Q}}{p}\right)$ is the identity element of $\Gal(K/\mathbb{Q})$. If $K \subset L$, then the restriction of $\left(\frac{L/\mathbb{Q}}{p}\right)$ to $\Gal(K/\mathbb{Q})$ is $\left(\frac{K/\mathbb{Q}}{p}\right)$.

In particular, the Artin symbol of a prime $p$ with respect to a extension such as $\mathbb{Q}(\zeta_n, a^{1/n})$ controls the remainder of $p$ modulo $n$ (via the image of the root of unity $\zeta_n$) and whether $a$ is a $d$th power modulo $p$ or not for all $d \mid n$ (via the image of the element $a^{1/n}$).

We use the following version of the Chebotarev density theorem.

\begin{theorem}[Chebotarev density theorem]
Let $K/\mathbb{Q}$ be a finite Galois extension with Galois group $G$, and let $C$ be a conjugacy class of $G$. Then, the set 
$$S = \{p | p \text{ is unramified in } K \text{ and } \left(\frac{K/\mathbb{Q}}{p}\right) = C\}$$
has natural density $\frac{|C|}{|G|}.$
\end{theorem}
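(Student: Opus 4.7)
The plan is analytic, via Artin $L$-functions. For each irreducible complex character $\chi$ of $G = \Gal(K/\mathbb{Q})$, I would introduce the Artin $L$-function $L(s,\chi)$, defined for $\Re(s) > 1$ by an Euler product whose factor at an unramified prime $p$ is $\det(1 - \chi(\sigma_p) p^{-s})^{-1}$, where $\sigma_p$ is any element of the Frobenius conjugacy class attached to $p$; the factors at ramified primes involve the inertia-fixed subspace in the standard way. These $L$-functions are the bookkeeping devices that convert prime counting into analytic information about $K/\mathbb{Q}$.

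The decisive analytic input to assemble is that for nontrivial irreducible $\chi$, the function $L(s,\chi)$ extends meromorphically to a neighbourhood of $s=1$ and is nonzero there. I would obtain this via Brauer's induction theorem, which writes $\chi$ as an integer combination of characters induced from one-dimensional characters of (Brauer-elementary, in particular cyclic) subgroups. Since induction on characters corresponds to equality of $L$-functions over intermediate fields, this expresses $L(s,\chi)$ as a rational combination of abelian Hecke $L$-functions, whose meromorphic continuation and nonvanishing at $s=1$ come from Hecke's theory and are the natural generalization of Dirichlet's theorem.

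Next I would exploit character orthogonality: for $c \in C$, the indicator of $C$ on $G$ equals $\tfrac{|C|}{|G|} \sum_{\chi} \overline{\chi(c)}\chi$, the sum ranging over irreducible characters. Using the expansion $\log L(s,\chi) = \sum_p \chi(\sigma_p) p^{-s} + O(1)$ as $s \to 1^+$, the weighted combination $\sum_\chi \overline{\chi(c)} \log L(s,\chi)$ isolates $\sum_{p :\, \sigma_p \in C} p^{-s}$ up to a bounded term. The trivial character contributes $\log \zeta(s)$, which has a simple pole of residue $1$ at $s=1$, while each nontrivial $\chi$ contributes boundedly by the nonvanishing statement above. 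Dividing by $\log \tfrac{1}{s-1}$ and passing to the limit gives the Dirichlet density $|C|/|G|$.

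To upgrade from Dirichlet density to natural density, I would apply the Wiener--Ikehara Tauberian theorem to the Dirichlet series $\sum_{\sigma_p \in C} p^{-s}$, using the boundary behaviour on $\Re(s)=1$ that the previous step has established. The main obstacle throughout is the nonvanishing and meromorphic continuation of $L(s,\chi)$ at $s=1$ in the non-abelian setting; Artin's holomorphy conjecture is still open, but Brauer's theorem provides exactly the weaker rational-combination statement needed for the density, which is why no further arithmetic hypothesis enters.
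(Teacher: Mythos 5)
The paper does not prove this statement at all: the Chebotarev density theorem is quoted in Section \ref{sec:notations} as classical background, to be used as a black box in the later arguments, so there is no in-paper proof to compare against. Your proposal is the standard modern proof and its overall architecture is sound: Artin $L$-functions, Brauer induction to reduce to abelian Hecke $L$-functions (the pole/zero orders at $s=1$ of the intermediate zeta factors cancel because $\langle \chi, 1\rangle = 0$ for nontrivial irreducible $\chi$), orthogonality of characters to isolate the Frobenius class, and a Tauberian step. One point you should tighten: for the upgrade from Dirichlet density to \emph{natural} density via Wiener--Ikehara you need the combination $\sum_{\sigma_p \in C} p^{-s}$ (or the corresponding weighted sum with $\log p$) minus its polar part to extend continuously to the whole closed half-plane $\Re(s) \ge 1$, which requires nonvanishing of every $L(s,\chi)$ on the entire line $\Re(s)=1$, not merely at $s=1$; this does follow from Hecke's theory (the de la Vall\'ee Poussin argument for Hecke $L$-functions), but your write-up only establishes, and only invokes, the behaviour at the single point $s=1$. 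It is also worth knowing that the historically original route (Chebotarev, later simplified by Deuring and MacCluer) avoids Artin $L$-functions and Brauer's theorem entirely by a field-crossing argument that reduces the general case to cyclotomic extensions, where the statement is essentially Dirichlet's theorem; that proof is more elementary in its inputs, while yours generalizes more readily to effective and GRH-conditional versions.
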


As our proofs of infinitude of sets of primes is based on the Chebotarev density theorem, this will automatically lead to a positive lower density for the set at hand.

\section{Background on Kummer extensions}\label{sec:kummer}

We state the following results from \cite[Section 3]{jarviniemi}. More general results may be found in \cite{perucca-sgobba}.

\begin{proposition}
\label{prop:kummerGalois}
Let $a_1, \ldots , a_k$ be multiplicatively independent rationals, and let $K$ be a finite Galois extension of $\mathbb{Q}$. There exists a positive integer $N$ with the follwing property:

For any integers $n, m_1, \ldots , m_k$, where $m_i \mid n$ for all $i$, and $x, x_1, \ldots , x_k$ with $(x, Nn) = 1, N \mid x-1, x_1, \ldots , x_k$ there exists an element of the Galois group of
$$K(\zeta_{Nn}, a_1^{1/Nm_1}, \ldots , a_k^{1/Nm_k})/K$$
sending
$$\zeta_{Nn} \to \zeta_{Nn}^{x}, a_i^{1/Nm_i} \to \zeta_{Nm_i}^{x_i}a_i^{1/Nm_i}.$$
\end{proposition}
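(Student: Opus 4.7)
I would prove this by reducing to the case $K = \mathbb{Q}$ via a stabilisation argument and then invoking uniform Kummer theory for the multiplicatively independent tuple $(a_i)$.

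For $K = \mathbb{Q}$ the key input is the following classical fact, going back to work of Schinzel and formulated in the form needed in \cite{perucca-sgobba} (and applied in \cite{jarviniemi}): there exists $N_0$ such that for every $n$, every choice of divisors $m_i \mid n$, and every tuple $(x, x_1, \ldots, x_k)$ with $(x, N_0 n) = 1$, $N_0 \mid x - 1$ and $N_0 \mid x_i$, the group $\Gal(\mathbb{Q}(\zeta_{N_0 n}, a_1^{1/N_0 m_1}, \ldots, a_k^{1/N_0 m_k})/\mathbb{Q})$ contains an element acting on the generators as prescribed. In essence, multiplicative independence of the $a_i$ forces the Kummer pairing to be surjective modulo a bounded defect, which is absorbed into $N_0$.

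To incorporate an arbitrary Galois extension $K$ of $\mathbb{Q}$, I would enlarge $N_0$ to a multiple $N$ such that
$$K \cap \mathbb{Q}(\zeta_{Nn}, a_1^{1/N m_1}, \ldots, a_k^{1/N m_k}) \subseteq \mathbb{Q}(\zeta_N, a_1^{1/N}, \ldots, a_k^{1/N})$$
for every $n$ and every $(m_i)$. This is possible since $K$ has only finitely many subfields, so these intersections stabilise as $N$ grows. Any $\sigma$ produced by the base case then fixes $\zeta_N$ (as $x \equiv 1 \pmod{N}$) and each $a_i^{1/N}$ (since $\sigma(a_i^{1/N m_i})^{m_i} = \zeta_N^{x_i} a_i^{1/N}$ and $N \mid x_i$), hence fixes the displayed intersection. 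By the standard identification $\Gal(KL/K) \cong \Gal(L/K \cap L)$ with $L = \mathbb{Q}(\zeta_{Nn}, a_i^{1/N m_i})$, the element $\sigma$ then extends to an element of $\Gal(K(\zeta_{Nn}, a_i^{1/N m_i})/K)$ with the desired action on the generators.

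The main obstacle is the uniform Kummer input: establishing that a single $N_0$ works for every $n$ and every $(m_i)$ is delicate at the prime $2$, where interactions between $\zeta_{2^r}$, the square roots of the $a_i$, and $\pm 1$ can produce unexpected degree drops. Multiplicative independence of the $a_i$ is precisely the hypothesis that keeps the resulting defect bounded uniformly in $n$ and $(m_i)$.
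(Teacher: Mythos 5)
The paper does not actually prove this proposition: it is imported verbatim from \cite[Section 3]{jarviniemi} (with a pointer to \cite{perucca-sgobba} for more general statements), so there is no in-paper argument to compare against line by line. That said, your outline is the standard route and I see no gap in it. The descent from general $K$ to $\mathbb{Q}$ is correct: since $K/\mathbb{Q}$ is finite Galois it has finitely many subfields, the compositum of all intersections $K \cap \mathbb{Q}(\zeta_{Nn}, a_1^{1/Nm_1}, \ldots, a_k^{1/Nm_k})$ is attained at a finite stage and hence lands in $\mathbb{Q}(\zeta_{N}, a_1^{1/N}, \ldots, a_k^{1/N})$ after enlarging $N$; the congruences $N \mid x-1$ and $N \mid x_i$ force your $\sigma$ to fix that field pointwise, and the isomorphism $\Gal(KL/K) \cong \Gal(L/K\cap L)$ then lifts $\sigma$ with the same action on generators. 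One point worth making explicit (you use it implicitly) is that the base-case constant $N_0$ remains valid after being replaced by any multiple $N$: this follows by applying the $N_0$-statement to $n' = (N/N_0)n$ and $m_i' = (N/N_0)m_i$. The remaining content --- a single $N_0$ bounding the cyclotomic--Kummer entanglement uniformly in $n$ and the $m_i$, with the only delicate behaviour at $2$ --- is exactly the input that \cite{jarviniemi} and \cite{perucca-sgobba} supply, and you are right that multiplicative independence of the $a_i$ is what keeps that defect bounded; leaving it as a citation is no worse than what the paper itself does.
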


\begin{proposition}
\label{prop:kummerDisjoint}
Let $a_1, \ldots , a_k$ be multiplicatively independent rationals, and let $K$ be a finite Galois extension of $\mathbb{Q}$. There exists an integer $N$ such that for any $n, n', m_1, \ldots m_k$, where $(n, Nn') = 1$ and $m_i \mid n$ for all $i$, the fields
$$\mathbb{Q}(\zeta_n, a_1^{1/m_1}, \ldots , a_k^{1/m_k})$$
and
$$K(\zeta_{Nn'}, a_1^{1/Nn'}, \ldots , a_k^{1/Nn'}).$$
are linearly disjoint and the former extension has degree $\phi(n)m_1 \cdots m_k$.
\end{proposition}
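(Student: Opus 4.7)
The plan is to prove both claims simultaneously by producing every desired automorphism on a single large compositum containing both fields. Set $\mathcal{L} := K(\zeta_{Nnn'}, a_1^{1/Nm_1 n'}, \ldots, a_k^{1/Nm_k n'})$, where $N$ is the integer given by Proposition \ref{prop:kummerGalois} for the field $K$ and the rationals $a_1, \ldots, a_k$. Since $m_i \mid n$ and $(n, Nn')=1$, we have $m_i n' \mid nn'$, so this field falls within the scope of Proposition \ref{prop:kummerGalois} applied with $n_{\textup{new}} = nn'$ and $m_{i,\textup{new}} = m_i n'$. By inspection, $\mathcal{L}$ contains both $L_1 := \mathbb{Q}(\zeta_n, a_1^{1/m_1}, \ldots, a_k^{1/m_k})$ and $L_2 := K(\zeta_{Nn'}, a_1^{1/Nn'}, \ldots, a_k^{1/Nn'})$.

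Given any target tuple $(s, s_1, \ldots, s_k) \in (\mathbb{Z}/n\mathbb{Z})^* \times \prod_{i=1}^k \mathbb{Z}/m_i\mathbb{Z}$, I would use the Chinese Remainder Theorem---relying on $(n, Nn')=1$ and the consequent $(m_i, Nn')=1$---to choose integers $x \equiv s \pmod{n}$ with $x \equiv 1 \pmod{Nn'}$, and $x_i \equiv s_i \pmod{m_i}$ with $x_i \equiv 0 \pmod{Nn'}$. These congruences guarantee $(x, Nnn')=1$, $N \mid x-1$, and $N \mid x_i$, so Proposition \ref{prop:kummerGalois} supplies an element $\tilde\sigma \in \Gal(\mathcal{L}/K)$ acting by $\zeta_{Nnn'} \mapsto \zeta_{Nnn'}^x$ and $a_i^{1/Nm_i n'} \mapsto \zeta_{Nm_i n'}^{x_i} a_i^{1/Nm_i n'}$. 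Raising these images to the powers $n$, $m_i$, and $Nn'$ (as appropriate) shows that $\tilde\sigma$ fixes $\zeta_{Nn'}$, each $a_i^{1/Nn'}$, and (by construction) all of $K$---so $\tilde\sigma$ fixes $L_2$ pointwise---while its restriction to $L_1$ sends $\zeta_n \mapsto \zeta_n^s$ and $a_i^{1/m_i} \mapsto \zeta_{m_i}^{s_i} a_i^{1/m_i}$.

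This single construction delivers both conclusions. The natural map $\Gal(L_1/\mathbb{Q}) \to (\mathbb{Z}/n\mathbb{Z})^* \times \prod_i \mathbb{Z}/m_i\mathbb{Z}$ is surjective, yielding $[L_1:\mathbb{Q}] \ge \phi(n) m_1 \cdots m_k$ (the reverse inequality is immediate from $\deg(X^{m_i} - a_i) = m_i$), and every $\sigma \in \Gal(L_1/\mathbb{Q})$ lifts to an element of $\Gal(L_1 L_2 / L_2)$, which is precisely the linear disjointness of $L_1$ and $L_2$ over $\mathbb{Q}$. The main obstacle is the combinatorial bookkeeping: one must assemble the right tower $\mathcal{L}$ and verify the coprimality $(m_i, Nn')=1$ needed for the second CRT step---this is where the hypotheses $m_i \mid n$ and $(n, Nn')=1$ are used essentially. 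Once these pieces are in place, the proof reduces to a direct invocation of Proposition \ref{prop:kummerGalois}.
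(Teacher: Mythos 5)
The paper does not actually prove Proposition \ref{prop:kummerDisjoint}: it is imported verbatim from \cite[Section 3]{jarviniemi}, so there is no in-paper argument to compare against. Judged on its own, your derivation of the statement from Proposition \ref{prop:kummerGalois} is correct and is a natural way to make the paper self-contained. The compositum $\mathcal{L} = K(\zeta_{Nnn'}, a_1^{1/Nm_1n'}, \ldots)$ is legitimately within the scope of Proposition \ref{prop:kummerGalois} (take $n_{\mathrm{new}} = nn'$, $m_{i,\mathrm{new}} = m_in'$), your CRT choices do satisfy $(x, Nnn') = 1$, $N \mid x-1$ and $N \mid x_i$, and the restriction computations ($\zeta_{Nn'} = \zeta_{Nnn'}^{n} \mapsto \zeta_{Nn'}^x = \zeta_{Nn'}$, $a_i^{1/Nn'} \mapsto \zeta_{Nn'}^{x_i}a_i^{1/Nn'} = a_i^{1/Nn'}$, and the stated action on $L_1$) all check out. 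Two small points you should make explicit rather than leave implicit: first, you need a coherent choice of radicals, i.e.\ $a_i^{1/m_i} = (a_i^{1/Nm_in'})^{Nn'}$ and $a_i^{1/Nn'} = (a_i^{1/Nm_in'})^{m_i}$, so that ``$\tilde\sigma$ fixes $L_2$'' and ``$\tilde\sigma$ restricts to the prescribed map on $L_1$'' are statements about the actual generators of those fields; second, the final step (surjectivity of $\Gal(L_1L_2/L_2) \to \Gal(L_1/\mathbb{Q})$ implies linear disjointness in the sense the paper uses, namely $\Gal(L_1L_2/\mathbb{Q}) \cong \Gal(L_1/\mathbb{Q}) \times \Gal(L_2/\mathbb{Q})$) uses that both $L_1$ and $L_2$ are Galois over $\mathbb{Q}$, which holds here because $m_i \mid n$ puts $\zeta_{m_i}$ in $L_1$ and $L_2$ contains $\zeta_{Nn'}$, but deserves a sentence. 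With those additions the argument is complete.
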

Recall that finite Galois extensions $K_1$ and $K_2$ are linearly disjoint (over $\mathbb{Q}$) iff one has the isomorphism $\Gal(K_1K_2/\mathbb{Q}) \cong \Gal(K_1/\mathbb{Q}) \times \Gal(K_2/\mathbb{Q})$.

\section{Proof of Theorem \ref{thm:inDiv}}
\label{sec:inDiv}

We first prove Theorem \ref{thm:inDiv} since it is the easiest one. Assume first that $q \neq 2$.

Let $\ell_1, \ldots , \ell_m$ be the primes which divide some of $a_i$. Let $N$ be as in Proposition \ref{prop:kummerGalois} when applied to $\ell_1, \ldots , \ell_m$ (with the base field $K = \mathbb{Q}$). One sees that there exists an integer $x$ such that $Nq \mid x-1$ and $Nq^2 \nmid x-1$. Clearly now $\gcd(Nq^2, x) = 1$.

By the choice of $N$, there exists an automorphism $\sigma$ of
$$K = \mathbb{Q}(\zeta_{Nq^2}, \ell_1^{1/Nq}, \ldots , \ell_m^{1/Nq})$$
such that
$$\sigma(\zeta_{Nq^2}) = \zeta_{Nq^2}^x, \sigma(\ell_i^{1/Nq}) = \ell_i^{1/Nq}.$$
By the Chebotarev density theorem, there exists infinitely many primes $p$ such that $\sigma \in \left(\frac{K/\mathbb{Q}}{p}\right)$. 

For these primes $p$ one has $p \equiv x \pmod{Nq^2}$, so $p \equiv 1 \pmod{Nq}$ but $p \not\equiv 1 \pmod{Nq^2}$, and $\ell_i$ is a perfect $Nq$th power modulo $p$. Thus, each of $|a_i|$ is a perfect $Nq$th power modulo $p$, and so one has $q \nmid \ord_p(|a_i|)$. Since $q$ is odd and $\ord_p(a)$ and $\ord_p(-a)$ differ by only a power of two, one has $q \nmid \ord_p(a_i)$.

If $q = 2$, assume that no product of $a_i$ equals $-1$. One may apply Lemma \ref{lemma:multInd} to assume that the numbers $a_i$ are multiplicatively independent. One then repeats the above argument with the numbers $\ell_1, \ldots , \ell_m$ replaced with the multiplicatively independent numbers $a_1, \ldots , a_n$.

\section{Necessity of the conditions of Theorem \ref{thm:main}} \label{sec:necessity}

Assume that there are infinitely many primes $p$ satisfying the conditions. Let $p$ be such a prime which is larger than all of $|o_i|$ and $|e_i|$, and let $k = v_2(p-1)$. Now $o_i$ is a perfect $2^k$th power modulo $p$ for all $i$, while the numbers $e_i$ are not.

Let $g$ be a primitive root modulo $p$. Let $q_1, \ldots , q_m$ be the primes which divide at least one of $o_1, \ldots , o_O$. Let $\ell(q_i)$ be an integer such that $g^{\ell(q_i)} \equiv q_i \pmod{p}$. Define
$$\ell(q_1^{e_1} \cdots q_m^{e_m}) = e_1\ell(q_1) + \ldots + e_m\ell(q_m),$$
where $e_i$ are any (not necessarily positive) integers.

Now $\ell(o_i), 1 \le i \le O$ and $\ell(a), a \in A$ are defined. Furthermore $g^{\ell(x)} \equiv x \pmod{p}$ for all $x \in \mathbb{Z}$ such that $\ell(x)$ is defined, and therefore $2^k \mid \ell(o_i)$ for all $i$ and $2^k \nmid \ell(a)$ for all $a \in A$.

Let $a \in A$ be arbitrary. Let $M'$ be an odd integer such that $2^MM'f(a)_i \in \mathbb{Z}$ for all $i$. Now
$$2^MM'\ell(a) = \ell(a^{2^MM'}) = \ell\left(\prod_{i = 1}^O o_i^{2^MM'f(a)_i}\right) = \sum_{i = 1}^O 2^MM'f(a)_i \ell(o_i).$$
The left hand side is not divisible by $2^{M+k}$, implying that
$$\sum_{i = 1}^O 2^MM'f(a)_i\ell(o_i) \not\equiv 0 \pmod{2^{M+k}}.$$
Hence
$$\sum_{i = 1}^O \frac{\ell(o_i)}{2^k} (f(a)_i2^M) \not\equiv 0 \pmod{2^M},$$
so the numbers $\ell(o_i)/2^k$ give a solution to the system of incongruences.

\section{Sufficiency of the conditions of Theorem \ref{thm:main}}
\label{sec:sufficiency}

Here is the idea of the proof. The parity conditions are handled by using a solution to the system in Theorem \ref{thm:main} to decide on power residue conditions (cf. Section \ref{sec:necessity}, where the power residue conditions were used to obtain a solution to the system). The divisibility of orders is based on requiring $p \equiv 1 \pmod{q^k}$ for various primes $q$, where $k$ is large. This leads to $q \mid \ord_p(c)$ with ``high probability''. Similarly, irrational pairs $(a, b)$ are controlled by taking a large prime $q$ and requiring $a$ to be a perfect $q$th power modulo $p \equiv 1 \pmod{q}$, and one has $b$ not a perfect $q$th power with high probability. The imposed conditions can all be satisfied simultaneously by the tools in Section \ref{sec:kummer} and the Chebotarev density theorem.

Expand the basis $\{o_1, \ldots , o_O\}$ of $A$ into a $\mathbb{Q}$-basis for $A \cup B$ by a subset of the elements of $B$. Let $S_1$ be the constructed set. Thus, each element of $E$ may be expressed uniquely in the form
$$\prod_{s \in S_1} s^{f(s)},$$
where $f : S_1 \to \mathbb{Q}$.

Let $(c_i, q_i), 1 \le i \le n_1$ denote the pairs of integers corresponding to divisible pair for which we require $q_i \mid \ord_p(c_i)$. Expand the basis $S_1$ of $E$ to a basis $S_2$ of $E \cup \{c_1, \ldots , c_{n_1}\}$.

Let $N$ be as in Proposition \ref{prop:kummerGalois} when applied to the elements of $S_2$ (with the field $K = \mathbb{Q}$). Let $T$ denote two times the product of the distinct primes in $\{q_1, \ldots , q_{n_1}\}$.

We now prove the existence of infinitely many primes $p$ with $2 \nmid \ord_p(o_i), 2 \mid \ord_p(e_i), q_i \mid \ord_p(c_i)$. 

Let  $k$ be an arbitrarily large positive integer and let $x_1, \ldots , x_O$ be a solution to the system of incongruences modulo $2^M$. Construct the function $x : S_2 \to \mathbb{Z}$ as follows.
\begin{itemize}
\item For $o_i \in S_2$, choose $x(o_i)$ to be a uniformly random integer from $[1, 2^MT^k]$ satisfying $x(o_i) \equiv 2^kx_i \pmod{2^{M+k}}$.
\item For the elements $u \in S_2 \setminus \{o_1, \ldots , o_O\}$, choose $x(u)$ uniformly at random from $[1, 2^MT^k]$.
\end{itemize}
Also, let $X = NT^k + 1$.

Consider the automorphism $\sigma_x$ of
$$K_k = \mathbb{Q}(\zeta_{2^{M+1}NT^k}, S_2^{1/2^MNT^k}),$$
where $S^{1/n} = \{s^{1/n}, s \in S\}$, sending
$$\zeta_{2^{M+1}NT^k} \to \zeta_{2^{M+1}NT^k}^X, s_2^{1/2^MNT^k} \to \zeta_{2^MNT^k}^{Nx(s_2)}s_2^{1/2^MNT^k}$$
for all $s_2 \in S_2$. This $\sigma_x$ is well defined by the choice of $N$.

By the Chebotarev density theorem, there are infinitely many primes $p$ whose Artin symbol with respect to $K_k$ contains $\sigma_x$. Consider these primes $p$. We make the following five observations.

\begin{itemize}
\item[(i)] $p \equiv X \pmod{2^{M+1}NT^k}$, so $p \equiv 1 \pmod{NT^k}$ and $v_2(p-1) = v_2(N) + k$.
\item[(ii)] By the choice of $x(o_i)$, the element $o_i^{1/2^{v_2(N) + k}}$ is fixed. Thus, $o_i$ is a perfect $2^{v_2(N) + k}$th power modulo $p$, and hence $\ord_p(o_i)$ is odd.
\item[(iii)] For any $a \in A$, let $M' = M'(a)$ be an odd integer such that $2^MM'f(a)_i$ is an integer. The number 
$$a^{2^MM'/2^{v_2(N) + k+M}} =  \prod_{i = 1}^O o_i^{2^MM'f(a)_i/2^{v_2(N) + k+M}}$$
is an element of $K_k$ which is mapped to
$$a^{2^MM'/2^{v_2(N) + k + M}} \prod_{i = 1}^O \zeta_{2^{v_2(N) + k + M}}^{2^MM'f(a)_iNx(o_i)}.$$
By the choice of $x(o_i)$, the sum
$$\sum_{i = 1}^O 2^MM'f(a)_iNx(o_i)$$
is not zero modulo $2^{M+v_2(N)+k}$. Therefore the element
$$a^{2^MM'/2^{v_2(N) + k + M}} = a^{M'/2^{v_2(N) + k}}$$
is not fixed, and hence $a^{M'}$ is not a perfect $2^{v_2(N) + k}$th power modulo $p$. This implies $2 \mid \ord_p(a)$.

\item[(iv)] For any $b \in B$, write
$$b = \prod_{s \in S_1} s^{f(s)},$$
where $f : S_1 \to \mathbb{Q}$. Here $f(s) \neq 0$ for at least one $s \in S_1 \setminus \{o_1, \ldots , o_O\}$. We claim that $2 \mid \ord_p(b)$ with probability approaching $1$ as $k \to \infty$ (with respect to the random choice of $\sigma_x$).

Let $M' = M'(b) \in \mathbb{Z}_+$ be such that all of $M'f(s)$ are integers. Consider the element
$$b^{M'/2^{v_2(N) + k}} = \prod_{s \in S_1} s^{M'f(s)/2^{v_2(N) + k}}$$
of $K_k$ and its image
$$b^{M'/2^{v_2(N) + k}} \prod_{s \in S_1} \zeta_{2^{k+v_2(N)}}^{M'f(s)Nx(s)}$$
under $\sigma_x$. Since $f(s) \neq 0$ for at least one $s \in S_1 \setminus \{o_1, \ldots , o_O\}$ and this $f(s)$ is random modulo $2^k$, as $k \to \infty$ the probability that
$$\sum_{s \in S_1} M'f(s)Nx(s) \equiv 0 \pmod{2^{k+v_2(N)}}$$
approaches $0$. Thus, the probability of choosing $\sigma_x$ such that $b^{M'}$ is a perfect $2^{k + v_2(N)}$th power modulo $p$ approaches zero. This means that there are ``many'' choices of $\sigma_x$ such that $\ord_p(b)$ is even for the primes $p$ with $\sigma_x \in \left(\frac{K_k/\mathbb{Q}}{p}\right)$.

\item[(v)] Let $(c_i, q_i), 1 \le i \le n_1$ be some pair corresponding to a divisible pair, so we want $q_i \mid \ord_p(c_i)$. We claim that this happens with probability approaching $1$ as $k \to \infty$. The proof is similar to the one in (iv): Write
$$c_i = \prod_{s \in S_2} s^{f(s)},$$
where $f : S_2 \to \mathbb{Q}$. Let $M' = M'(c_i)$ be such that $M'f(s)$ is an integer. The element $c_i^{M'/q_i^{v_q(N) + k}}$ is fixed under $\sigma_x$ if and only if
$$\sum_{s \in S_2} M'f(s)Nx(s) \equiv 0 \pmod{q_i^{v_q(N) + k}}.$$
As the numbers $x(s)$ are random modulo $q_i^{k}$, this happens with probability approaching $0$, so $\ord_p(c_i^{M'})$ and hence $\ord_p(c_i)$ are divisible by $q_i$ with high probability.
\end{itemize}

Hence there exists some $k$ and $\sigma_x$ such that the primes $p$ with $\sigma_x \in \left(\frac{K_k/\mathbb{Q}}{p}\right)$ satisfy the conditions $2 \nmid \ord_p(o_i), 2 \mid \ord_p(e_i), q_i \mid \ord_p(c_i)$. We are left with handling the irrational pairs.

Let $(a_1, b_1), \ldots , (a_{n_2}, b_{n_2})$ be the irrational pairs. For each $1 \le i \le n_2$, pick a prime $q_i$ such that the fields
$$L_i = \mathbb{Q}(\zeta_{q_i}, a_i^{1/q_i}, b_i^{1/q_i})$$
and the field $K_k$ constructed above are linearly disjoint and such that the degree of $L_i$ is the maximum possible $q_i^2(q_i - 1)$ for all $i$. The existence of such primes $q_i$ is guaranteed by Proposition \ref{prop:kummerDisjoint}. (In fact, any choice of large enough distinct primes works.)

For each $L_i$ there exists an element of $\Gal(L_i/\mathbb{Q})$ fixing $\zeta_{q_i}$ and $a_i^{1/q_i}$ but which does not fix $b_i^{1/q_i}$. The primes $p$ with the corresponding Artin symbol are such that $p \equiv 1 \pmod{q_i}$, $a_i$ is a $q_i$th power modulo $p$ and $b_i$ is not. This leads to the insolvability of $a_i^x \equiv b_i \pmod{p}$.

We have already proved the existence of an element of $\Gal(K_k/\mathbb{Q})$ taking care of parities and divisibilities of orders. By linear disjointness,
$$\Gal(K_kL_1 \cdots L_{n_2}/\mathbb{Q}) \cong \Gal(K_k/\mathbb{Q}) \times \Gal(L_1/\mathbb{Q}) \times \cdots \times \Gal(L_{n_2}/\mathbb{Q}).$$
We may therefore merge the constructed maps on $K_k, L_1, \ldots , L_{n_2}$ to an automorphism of the compositum $K_kL_1 \cdots L_{n_2}$. The infinitely many primes $p$ with the corresponding Artin symbol satisfy the conditions of Theorem \ref{thm:main}.

\section{Proof of Theorem \ref{thm:gcd}}

Clearly (i) implies (ii), so we focus on the other direction.

The idea is roughly as follows. Let $\ell_1, \ldots, \ell_m$ be the primes which divide at least one of $a_1, \ldots , a_n$. For each prime $q \mid m_1 \cdots m_n$ take a large prime $p$, and consider which of the numbers of the form $\ell_1^{e_1} \cdots \ell_m^{e_m}, e_i \in \mathbb{Z}$ are perfect $q^k$th powers modulo $p$ for $k = 1, 2, \ldots$ This tells us how we should choose $q^k$th power residues modulo $p$ in order to guranatee $\gcd(\ord_p(a_i), q^{v_q(m_i)}) = q^{v_q(g_i)}$. We then prove that one may combine these conditions.

Fix some prime $q \mid m_1 \cdots m_n$, and let $p \nmid 2qa_1 \cdots a_n$ be a prime such that $q^{v_q(g_i)} \mid \ord_p(a_i)$ and $q^{v_q(g_i) + 1} \nmid \ord_p(a_i)$ when $q \mid m_i/g_i$. Denote $k = v_q(p-1)$.  Now $p$ is unramified in
$$K_{q, k} = \mathbb{Q}(\zeta_{2q^{k+1}}, \ell_1^{1/q^k}, \ldots , \ell_m^{1/q^k}).$$
Let $C$ denote the Artin symbol of $p$ with respect to $K_{q, k}$, and let $\sigma_{q, k}$ be any of its elements. Let $\sigma_{q, k}$ map
$$\zeta_{2q^{k+1}} \to \zeta_{2q^{k+1}}^x, \ell_i^{1/q^k} \to  \zeta_{q^k}^{x_i}\ell_i^{1/q^k}.$$
By the choice of $p$ we have 
\begin{align}
\label{eq:1}
v_q(x-1) = k,
\end{align}
and by the divisibility conditions on orders
\begin{align}
\label{eq:2}
v_q\left((x-1)\epsilon_j + \sum_{i = 1}^m v_{\ell_i}(a_j)x_i\right) \le k - v_q(g_j)
\end{align}
for all $1 \le j \le n$, equality occuring at least when $q \mid m_j/g_j$. Here $\epsilon_j = 0$ if $a_j > 0$ and $\epsilon_j = 1/2$ if $a_j < 0$. This term is present since for $a_j < 0$ we have
$$\sigma_{q, k}(a_j^{1/q^k}) = \zeta_{2q^k}^{x} \sigma_{q, k}(|a_j|^{1/q^k}) = \zeta_{2q^k}^{x} \zeta_{q^k}^{\sum_{i = 1}^m v_{\ell_i}(a_j)x_i} |a_j|^{1/q^k} = \zeta_{q^k}^{(x-1)/2 + \sum_{i = 1}^m v_{\ell_i}(a_j)x_i}a_j^{1/q^k}.$$

The transformation $x-1 \to q(x-1)$, $x_i \to qx_i, k \to k+1$ does not affect the truth of \eqref{eq:1} and \eqref{eq:2}. We deduce that for any prime $q$ and positive integer $k$, there exists integers $x, x_1, \ldots , x_m$ satisfying $x > 1, q^k \mid x-1, x_1, \ldots , x_m$ and
\begin{align}
\label{eq:3}
v_q\left((x-1)\epsilon_j + \sum_{i = 1}^m v_{\ell_i}(a_j)x_i\right) \le v_q(x-1) - v_q(g_j),
\end{align}
again with equality when $q \mid m_j/g_j$.

Let $N$ be as in Proposition \ref{prop:kummerGalois} when applied to the numbers $\ell_1, \ldots , \ell_m$. Let $T$ be the product of all primes dividing at least one of $m_1, \ldots , m_n$. By the Chinese remainder theorem there exist integers $x, x_1, \ldots , x_m$ satisfying $x > 1,$ $N \mid x-1, x_1, \ldots , x_m$, and \eqref{eq:3} for all $q \mid T, 1 \le j \le n$ (again with correct equality cases). We may additionally assume $\gcd(x, 2NT) = 1$. Let
$$P = \prod_{q \mid T} q^{v_q(x-1)+1}.$$

By the choice of $N$, there exists an automorphism $\sigma$ of
$$K = \mathbb{Q}(\zeta_{2NP}, \ell_1^{1/NP}, \ldots , \ell_m^{1/NP})$$
mapping
$$\zeta_{2NP} \to \zeta_{2NP}^x, \ell_i^{1/NP} \to \zeta_{NP}^{x_i}\ell_i^{1/NP}.$$

Apply the Chebotarev density theorem. Let $p$ be a prime whose Artin symbol with respect to $K$ contains $\sigma$. From \eqref{eq:3} one now sees that $\ord_p(a_j)$ is divisible by $q^{v_q(g_j)}$ for all $q \mid T, 1 \le j \le n$, and not by $q^{v_q(g_j)+1}$ for $q \mid m_j/g_j$.

\begin{remark}
The degree of
$$\mathbb{Q}(\zeta_t, \ell_1^{1/t}, \ldots , \ell_m^{1/t})$$
is not in general $\phi(t)t^m$ due to square roots of integers lying in cyclotomic fields. (In fact, this is the only reason for non-maximality, and at least for $t$ odd the degree is $\phi(t)t^m$.) Since the degree is not maximal, there are some restrictions on the images of $\zeta_t$ and $\ell_i^{1/t}$ under the elements of the Galois group. However, the degree is almost maximal by Proposition \ref{prop:kummerGalois} (in this case the degree is at least $\phi(t)t^m/2^m$), so by repeatedly performing the transformation $x-1 \to q(x-1), x_i \to qx_i, k \to k+1$ in the proof we get away from these ``low-level'' restrictions on the elements of the Galois group.
\end{remark}

\bibliography{insolvabilityExponential}

\begin{thebibliography}{10}

\bibitem{heath-brown}
D.~R. Heath-Brown.
\newblock {Artin's conjecture for primitive roots}.
\newblock {\em The Quarterly Journal of Mathematics}, 37(1):27--38, 03 1986.

\bibitem{hooley}
C.~Hooley.
\newblock On {A}rtin's conjecture.
\newblock {\em J. Reine Angew. Math.}, 225:209--220, 1967.

\bibitem{jarviniemi}
O.~{J{\"a}rviniemi}.
\newblock {Equality of orders of a set of integers modulo a prime}.
\newblock {\em arXiv e-prints}, page arXiv:1912.02554, December 2019.

\bibitem{lagarias}
J.~C. Lagarias.
\newblock The set of primes dividing the {L}ucas numbers has density $2/3$.
\newblock {\em Pacific J. Math.}, 118(2):449--461, 1985.

\bibitem{matthews}
K.~Matthews.
\newblock A generalisation of {A}rtin's conjecture for primitive roots.
\newblock {\em Acta Arithmetica}, 29(2):113--146, 1976.

\bibitem{ms}
P.~Moree and P.~Stevenhagen.
\newblock A two-variable {A}rtin conjecture.
\newblock {\em J. Number Theory}, 85(2):291--304, 2000.

\bibitem{moree}
Pieter Moree.
\newblock Artin's primitive root conjecture -a survey -.
\newblock {\em Integers}, 12, 01 2005.

\bibitem{mss}
M.~R. Murty, F.~S\'{e}guin, and C.~L. Stewart.
\newblock A lower bound for the two-variable {A}rtin conjecture and prime
  divisors of recurrence sequences.
\newblock {\em J. Number Theory}, 194:8--29, 2019.

\bibitem{perucca-sgobba}
Antonella Perucca and Pietro Sgobba.
\newblock Kummer theory for number fields and the reductions of algebraic
  numbers.
\newblock {\em International Journal of Number Theory}, 15(08):1617--1633,
  2019.

\bibitem{schinzel2}
Andrzej Schinzel.
\newblock On the congruence $a^x\equiv b$ (mod p).
\newblock {\em Bull. Acad. Polon. Sci., S{\'e}r. Sci. Math. Astronom. Phys},
  8:307--309, 1960.

\bibitem{schinzel}
Andrzej Schinzel.
\newblock Systems of exponential congruences.
\newblock {\em Demonstratio Mathematica}, 18(1):377 -- 396, 1985.

\bibitem{somer}
Lawrence Somer.
\newblock Linear recurrences having almost all primes as maximal divisors.
\newblock {\em Fibonacci numbers and their applications (Patras, 1984)}, pages
  257--272, 1986.

\bibitem{wagstaff}
Samuel Wagstaff.
\newblock Pseudoprimes and a generalization of {A}rtin's conjecture.
\newblock {\em Acta Arithmetica}, 41(2):141--150, 1982.

\end{thebibliography}
\bibliographystyle{plain}

\end{document}